\newtheorem{theorem}{Theorem}[section]
\newtheorem{definition}[theorem]{Definition}
\newtheorem{assumption}[theorem]{Assumption}
\newtheorem{lemma}[theorem]{Lemma}
\numberwithin{equation}{section}
\numberwithin{figure}{section}
\begin{document}

\title{High order numerical schemes \\ for transport equations on bounded domains}

\author{Benjamin {\sc Boutin}\thanks{IRMAR (UMR CNRS 6625), Universit\'e de Rennes, Campus de Beaulieu, 35042 Rennes Cedex, France ; 
{\tt benjamin.boutin@univ-rennes1.fr}} $\,$ \& Thi Hoai Thuong {\sc Nguyen}\thanks{IRMAR (UMR CNRS 6625), Universit\'e de Rennes, Campus 
de Beaulieu, 35042 Rennes Cedex, France ; {\tt thi-hoai-thuong.nguyen@univ-rennes1.fr}} $\,$ \& Abraham {\sc Sylla}\thanks{Institut Denis Poisson 
CNRS UMR 7013, Universit\'e de Tours, Facult\'e des Sciences et Techniques, B\^atiment E2, Parc de Grandmont, 37200 Tours, France ; 
{\tt Abraham.Sylla@lmpt.univ-tours.fr}} \\
\& S\'ebastien {\sc Tran-Tien}\thanks{\'Ecole Normale Sup\'erieure de Lyon, 15 parvis Ren\'e Descartes, 
BP 7000, 69342 Lyon Cedex 07, France ; {\tt sebastien.tran-tien@ens-lyon.fr}} $\,$ \& Jean-Fran\c{c}ois {\sc Coulombel}\thanks{Institut de 
Math\'ematiques de Toulouse ; UMR5219, Universit\'e de Toulouse ; CNRS, F-31062 Toulouse Cedex 9, France ; 
{\tt jean-francois.coulombel@math.univ-toulouse.fr}. Research of all authors was supported by the ANR project NABUCO, ANR-17-CE40-0025.}
}
\maketitle

\begin{abstract}
This article is an account of the NABUCO project achieved during the summer camp CEMRACS 2019 devoted to geophysical fluids and 
gravity flows. The goal is to construct finite difference approximations of the transport equation with nonzero incoming boundary data that 
achieve the best possible convergence rate in the maximum norm. We construct, implement and analyze the so-called inverse Lax-Wendroff 
procedure at the incoming boundary. Optimal convergence rates are obtained by combining sharp stability estimates for extrapolation 
boundary conditions with numerical boundary layer expansions. We illustrate the results with the Lax-Wendroff and $O3$ schemes.
\end{abstract}


\section{Introduction}

\subsection{Context}

The goal of this article is to propose a high order numerical treatment of nonzero incoming boundary data for the advection equation. 
The methodology is developed here for the one-dimensional problem but it is our hope that the tools used below will be useful for higher 
dimensional problems. We are thus given a fixed constant velocity $a>0$, an interval length $L>0$ and we consider the (continuous) 
problem:
\begin{equation}
\label{TBE}
\begin{cases}
\partial_t u +a \, \partial_x u \, = \, 0 \, , & t \ge 0 \, , \quad x \in (0,L) \, ,\\
u(0,x) \, = \, f(x) \, ,& x \in (0,L) \, ,\\
u(t,0) \, = \, g(t) \, ,& t \ge 0 \, .
\end{cases}
\end{equation}
The requirements on the initial and boundary data, namely $f$ and $g$, will be made precise below. The solution to \eqref{TBE} is given 
by the method of characteristics, which yields the formula:
\begin{equation}
\label{EXS}
\forall \, (t,x) \in \mathbb{R}^+ \times (0,L) \, ,\quad u(t,x) \, = \, \begin{cases}
f \left( x-a \, t \right) \, ,& \text{\rm if } x \ge a \, t \, ,\\
g \left(t-\dfrac{x}{a}\right) \, ,& \text{\rm if } x \le a \, t \, .
\end{cases}
\end{equation}
The question we address is how to construct high order numerical approximations of the solution \eqref{EXS} to \eqref{TBE} by means of 
(explicit) finite difference approximations. This problem has been addressed in \cite{CL18} in the case of \emph{zero} incoming boundary 
data (that is, $g=0$ in \eqref{TBE}). The focus in \cite{CL18} is on the outflow boundary ($x=L$ here since $a$ is positive), for which 
\emph{extrapolation} numerical boundary conditions are analyzed. Fortunately for us, a large part of the analysis in \cite{CL18} can be used 
here as a black box and we therefore focus on the incoming boundary. To motivate the analysis of this paper, let us present a very simple 
-though illuminating- example for which we just need to introduce the basic notations that will be used throughout this article. In all what 
follows, we consider a positive integer $J$, that is meant to be large, and define the space step $\Delta x$ and the grid points 
$(x_j)_{j \in \mathbb{Z}}$ by
$$
\Delta x \, := \, \dfrac{L}{J} \, ,\quad x_j \, := \, j \, \Delta x \quad (j \in \mathbb{Z}) \, .
$$
The interval $(0,L)$ corresponds to the cells $(x_{j-1},x_j)$ with $j=1,\dots,J$, but considering the whole real line $\{ j \in \mathbb{Z} \}$ will 
be useful in some parts of the analysis. The time step $\Delta t$ is then defined as $\Delta t := \lambda \, \Delta x$, where $\lambda >0$ is 
a constant that is fixed so that assumption \ref{ass:consistencystability} below is satisfied. We use from now on the notation $t^n := n \, 
\Delta t$, $n \in \mathbb{N}$; the quantity $u^n_j$ will play the role of an approximation for the solution $u$ to \eqref{TBE} at the time $t^n$ 
on the cell $(x_{j-1},x_j)$.

We now examine an example where the exact solution to \eqref{TBE} is approximated by means of the Lax-Wendroff scheme. The 
approximation reads:
\begin{equation}
\label{LW}
u_j^{n+1} \, = \, u_j^n -\dfrac{\lambda \, a}{2} \, (u_{j+1}^n -u_{j-1}^n) +\dfrac{(\lambda \, a)^2}{2} \, (u_{j+1}^n -2\, u_j^n +u_{j-1}^n) \, ,\quad 
n \in \mathbb{N} \, , \quad j=1,\dots,J \, ,
\end{equation}
where we recall that $\lambda=\Delta t/\Delta x$ is a fixed constant and $a>0$ is the transport velocity in \eqref{TBE}. The initial condition 
for \eqref{LW} is defined, for instance, by computing the cell averages of the initial condition $f$ in \eqref{TBE}, namely:
\begin{equation}
\label{initialdata}
\forall \, j=1,\dots,J \, ,\quad u_j^0 \, := \, \dfrac{1}{\Delta x} \, \int_{x_{j-1}}^{x_j} f(x) \, {\rm d}x \, .
\end{equation}
Without any boundary, the Lax-Wendroff scheme is a second order approximation to the transport equation \cite{gko}. We would like, of course, 
to maintain the second order accuracy property when implementing \eqref{LW} on an interval. This implementation, however, requires, at each 
time iteration $n$, the definition of the boundary (or \emph{ghost cell}) values $u_0^n$ and $u_{J+1}^n$.  At the outflow boundary, we prescribe 
an extrapolation condition \cite{kreissproc,goldberg}, the significance of which will be thoroughly justified in the next sections:
\begin{equation}
\label{LW-outflow}
u_{J+1}^n \, = \, 2 \, u_J^n -u_{J-1}^n \, ,\quad n \in \mathbb{N} \, .
\end{equation}
Combining \eqref{LW} with \eqref{LW-outflow}, the last interior cell value $u_J^n$ obeys the induction formula:
$$
u_J^{n+1} \, = \, u_J^n -\lambda \, a \, (u_J^n -u_{J-1}^n) \, ,\quad n \in \mathbb{N} \, ,
$$
which is nothing but the upwind scheme. It then only remains to determine the inflow numerical boundary condition $u_0^n$. Since we wish to 
approximate the exact solution to \eqref{TBE} and $u_0^n$ is meant, at least, to approximate the trace $u(t^n,0)$, it seems reasonable at first 
sight to prescribe the Dirichlet boundary condition:
\begin{equation}
\label{LW-inflow}
u_0^n \, = \, g(t^n) \, ,\quad n \in \mathbb{N} \, .
\end{equation}
In the case of \emph{zero} incoming boundary data ($g=0$), and for any sufficiently smooth initial condition $f$ that is ``flat'' at the incoming 
boundary, the main result of \cite{CL18} shows that the above numerical scheme \eqref{LW}, \eqref{initialdata}, \eqref{LW-outflow}, \eqref{LW-inflow} 
converges towards the exact solution to \eqref{TBE} with a rate of convergence $3/2$ in the maximum norm. Numerical simulations even predict 
that the rate of convergence should be $2$, or at least close to $2$, for smooth initial data. However, implementing the above numerical 
scheme\footnote{One can choose for instance $a=1$, $\lambda=5/6$, $L=6$, $f(x)=\sin (x)$, $g(t)=-\sin (t)$ and increase the integer $J$ 
geometrically.} quickly shows that the rate of convergence falls down to $1$ when $g$ is nonzero and satisfies the compatibility 
conditions\footnote{The rate of convergence could be even smaller than $1$ when the compatibility conditions are not satisfied but that would 
just reflect the fact that the exact solution \eqref{EXS} is not smooth (for instance, not even continuous if $f(0) \neq g(0)$).} described hereafter 
with the initial condition $f$.

Our goal is to provide with a thorough treatment of nonzero incoming boundary data and to design numerical boundary conditions that recover 
the optimal rate of convergence in the maximum norm (at least, the same rate of convergence as the one in \cite{CL18} for zero boundary data). 
The strategy is not new and is now referred to as the \emph{inverse Lax-Wendroff method}. It consists, as detailed below, in writing Taylor expansions 
with respect to the space variable $x$ close to the incoming boundary and then using the advection equation \eqref{TBE} to substitute the normal 
derivatives $\partial_x^m u(t,0)$ for tangential derivatives $\partial_t^m u(t,0)$, the latter being computed thanks to the boundary condition in 
\eqref{TBE}. This strategy is available when the boundary is non-characteristic \cite{BS}.

The inverse Lax-Wendroff method is a general strategy that has been followed in various directions. We refer for instance to 
\cite{TS10,ShuTan,FY13,VS15,DDJ} for various implementations related to either hyperbolic or kinetic partial differential equations. In these 
works, most of the time, the incoming numerical boundary condition prescribes the ghost cell value $u_0^n$ in terms of the boundary datum 
$g$ but also of \emph{interior cell} values $u_j^n$ with $j \ge 1$. This is the reason why \emph{stability} is a real issue in these works, see for 
instance the discussion in \cite[Section 4]{VS15}, and many rigorous justifications are still open. We develop here a simplified version of some 
of those previously proposed boundary treatments, but we rigorously justify the convergence with an (almost) optimal rate of convergence. As 
in \cite{CL18}, the key ingredient in our analysis is an \emph{unconditional} stability result for the Dirichlet boundary conditions which dates 
back to \cite{goldberg-tadmor1,goldberg-tadmor2}, see an alternative proof in \cite{jfcag}.

\subsection{The inverse Lax-Wendroff method}

We first fix from now on some notations. In all this article, we are given some fixed integers $p,r \in \mathbb{N}$ and consider an explicit two time 
step approximation for the solution to \eqref{TBE}:
\begin{equation}
\label{eq:scheme}
u_j^{n+1} \, = \, \sum_{\ell=-r}^p a_\ell \, u_{j+\ell}^n \, ,\quad n \in \mathbb{N} \, , \quad j=1,\dots,J \, .
\end{equation}
In \eqref{eq:scheme}, the numbers $a_{-r},\dots,a_p$ are defined in terms of the parameter $\lambda$ and of the velocity $a$ (see, for instance, 
\eqref{LW} for which $p=r=1$). These numbers are fixed, which means that \eqref{eq:scheme} is linear with respect to $(u_j^n)$. For simplicity, 
we follow \cite{CL18} and choose as initial data for \eqref{eq:scheme} the cell averages of the initial condition $f$ in \eqref{TBE}. This means 
that the vector $(u_1^0,\dots,u_J^0)$ is defined by \eqref{initialdata}. For \eqref{eq:scheme} to define inductively (with respect to $n$) the vector 
$(u_1^n,\dots,u_J^n)$, we need to prescribe the ghost cell values $u_{1-r}^n,\dots,u_0^n$ and $u_{J+1}^n,\dots,u_{J+p}^n$. As explained above, 
we focus here on the inflow boundary and we therefore follow the extrapolation boundary treatment of \cite{CL18} for the outflow boundary. Namely, 
if we define the finite difference operator $D_-$ as:
$$
(D_- v)_j \, := \, v_j -v_{j-1} \, ,
$$
and its iterates $D_-^m$ accordingly, we choose from now on an extrapolation order $k_b \in \mathbb{N}$ for the outflow boundary and prescribe:
\begin{equation}
\label{eq:bcNeumann}
(D_-^{k_b} u^n)_{J+\ell} \, = \, 0 \, ,\quad n \in \mathbb{N} \, , \quad \ell=1,\dots,p \, .
\end{equation}
The example \eqref{LW-outflow} corresponds to $k_b=2$ (recall $p=1$ for the Lax-Wendroff scheme so there is only one ghost cell). It now remains 
to prescribe the inflow values $u_{1-r}^n,\dots,u_0^n$. Unlike some previous works, we are going to prescribe Dirichlet boundary conditions, meaning 
for instance that the value $u_0^n$ will be determined in terms of the boundary datum $g$ only. Let us assume for a while that $u_j^n$ is a second 
order approximation of $u(t^n,(x_{j-1}+x_j)/2)$ where $u$ is the exact solution \eqref{EXS} of the continuous problem \eqref{TBE}. Then we 
\emph{formally} have:
$$
u_0^n \approx u \left( t^n,-\dfrac{\Delta x}{2} \right) \approx u(t^n,0) -\dfrac{\Delta x}{2} \, \partial_x u(t^n,0) \, ,
$$
where $\approx$ means ``equal up to $O(\Delta x^2)$'', and we then use \eqref{TBE} to get:
$$
u_0^n \approx u(t^n,0) +\dfrac{\Delta x}{2 \, a} \, \partial_t u(t^n,0) \, = \, g(t^n) +\dfrac{\Delta x}{2 \, a} \, g'(t^n) \, .
$$
The last term $\Delta x/(2\, a) \, g'(t^n)$ in the previous equality is precisely the correction that is required to recover the second order accuracy when 
dealing with the Lax-Wendroff scheme (compare with \eqref{LW-inflow}). More generally speaking, we could have pushed further the above Taylor 
expansion and obtained as a final (formal !) result that $u_0^n$ should be ``close'' (whatever that means !) to some quantity of the form:
$$
\sum_{\kappa=0}^K \dfrac{\Delta x^\kappa}{\kappa \, ! \, a^\kappa} \, \alpha_\kappa \, g^{(\kappa)}(t^n) \, ,
$$ 
where $K$ is a truncation order and $\alpha_0,\dots,\alpha_K$ are numerical constants.

The general form of the Dirichlet boundary conditions that we consider below is:
$$
u_\ell^n \, = \, \sum_{\kappa=0}^K \dfrac{\Delta x^\kappa}{\kappa \, ! \, a^\kappa} \, \alpha_{\kappa,\ell} \, g^{(\kappa)}(t^n) 
\, ,\quad n \in \mathbb{N} \, , \quad \ell=1-r,\dots,0 \, ,
$$
where the $\alpha_{\kappa,\ell}$'s are numerical constants which will play a role (together with the truncation order $K$) in the consistency analysis. 
There are two main choices which we discuss in this article. The first one is given in \cite{TS10,VS15}:
$$
\alpha_{\kappa,\ell} \, := \, \left( \dfrac{1}{2}-\ell \right)^\kappa \, ,\quad \kappa \in \mathbb{N} \, , \quad \ell=1-r,\dots,0 \, ,
$$
and is relevant if $u_j^n$ is eventually compared in the convergence analysis with $u(t^n,(x_{j-1}+x_j)/2)$, $u$ being the exact solution \eqref{EXS}. 
The other possible choice we advocate is:
\begin{equation}
\label{alpha}
\alpha_{\kappa,\ell} \, := \, \dfrac{(-1)^\kappa}{\kappa+1} \, \left( \ell^{\kappa+1} -(\ell-1)^{\kappa+1} \right) \, ,\quad 
\kappa \in \mathbb{N} \, , \quad \ell=1-r,\dots,0 \, ,
\end{equation}
and is relevant if $u_j^n$ is eventually compared (as in our main result, which is Theorem \ref{mainthm} below)  in the convergence analysis with the 
average of $u(t^n,\cdot)$ on the cell $(x_{j-1},x_j)$. The truncation order $K$ is discussed with our main result in the following paragraph.

\subsection{Results}

We assume that the approximation \eqref{eq:scheme} is consistent with the transport operator and that it defines a stable procedure on $\ell^2(\mathbb{Z})$.

\begin{assumption}[Consistency and stability without any boundary]
\label{ass:consistencystability}
The coefficients $a_{-r},... ,a_p$ in \eqref{eq:scheme} satisfy $a_{-r} \, a_p\neq 0$ (normalization), and for some integer $k \ge 1$, there holds:
\begin{align}
\forall \, m=0,\dots,k \, ,\quad \sum_{\ell=-r}^p \ell^m \, a_\ell &\, = \, (-\lambda \, a)^m \, ,\quad \text{\rm (consistency of order $k$),} \label{Assu1} \\
\sup_{\theta \in [0,2 \pi]} \left\vert \sum_{\ell=-r}^p a_\ell \, e^{i \, \ell \, \theta} \right\vert & \, \le \, 1 \, ,\quad 
\text{\rm ($\ell^2$-stability on $\mathbb{Z}$).} \label{Assu2}
\end{align}
\end{assumption}

\noindent For the Lax-Wendroff scheme \eqref{LW}, we have $p=r=1$ if $\lambda \, a \neq 1$, the integer $k$ equals $2$, and \eqref{Assu2} holds if 
and only if $\lambda \, a \le 1$. In Theorem \ref{mainthm} below and all what follows, the velocity $a>0$, the length $L>0$, the parameter $\lambda = 
\Delta t/\Delta x$ and the extrapolation order $k_b \in \mathbb{N}$ at the outflow boundary are given. Subsequent constants may depend on them. The 
integer $k \ge 1$ is also fixed such that assumption \ref{ass:consistencystability} holds. We consider the initial condition \eqref{initialdata} and its evolution 
by the numerical scheme \eqref{eq:scheme}, \eqref{eq:bcNeumann}, the inflow ghost cell values being given by:
\begin{equation}
\label{eq:bcDirichlet}
u_\ell^n \, = \, \sum_{\kappa=0}^{k-1} \dfrac{\Delta x^\kappa}{(\kappa+1) \, ! \, (-a)^\kappa} \, \big( \ell^{\kappa+1} -(\ell-1)^{\kappa+1} \big) \, g^{(\kappa)}(t^n) 
\, ,\quad n \in \mathbb{N} \, , \quad \ell=1-r,\dots,0 \, .
\end{equation}
Of course, prescribing \eqref{eq:bcDirichlet} is meaningful only if $g$ is sufficiently smooth (say, $g \in \mathcal{C}^{k-1}$). One could push further the 
Taylor expansion in \eqref{eq:bcDirichlet} and consider higher order correctors but it would require further smoothness on $g$ and it would eventually 
not improve our convergence result below, so fixing the truncation order $K=k-1$ seems to be the most convenient choice. Our main convergence result 
is the extension of the main result in \cite{CL18} to the case of nonzero boundary data.

\begin{theorem}[Main convergence result]
\label{mainthm}
Under assumption \ref{ass:consistencystability}, there exists a constant $C>0$ such that for any final time $T \ge 1$, any integer $J\in\mathbb{N}^*$, 
any data $f \in H^{k+1}((0,L))$ and $g \in H^{k+1}((0,T))$ satisfying the compatibility requirements at $t=x=0$:
$$
\forall \, m=0,\dots,k \, ,\quad f^{(m)}(0) \, = \, (-a)^{-m} \, g^{(m)}(0) \, ,
$$
the solution $(u_j^n)$ to \eqref{initialdata}, \eqref{eq:scheme}, \eqref{eq:bcNeumann}, \eqref{eq:bcDirichlet} satisfies:
\begin{equation}
\label{eq:convergenceresult}
\sup_{0 \le n \le T/\Delta t} \, \sup_{1 \le j \le J} \left\vert u_j^n -\dfrac{1}{\Delta x} \, \int_{x_{j-1}}^{x_j} u(t^n,x) \, {\rm d}x \right\vert 
\le C \, T \, e^{C\, T/L} \, \Delta x^{\min (k,k_b)-1/2} \, \big( \Vert f \Vert_{H^{k+1}((0,L))} +\Vert g \Vert_{H^{k+1}((0,T))} \big),
\end{equation}
with $u$ the exact solution to \eqref{TBE}, whose expression is given by \eqref{EXS}.
\end{theorem}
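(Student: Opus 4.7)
The plan is to follow and extend the strategy of \cite{CL18}. I introduce the cell-averaged exact solution $U_j^n := \frac{1}{\Delta x}\int_{x_{j-1}}^{x_j} u(t^n,x)\,\mathrm{d}x$, extended to the ghost indices by using \eqref{EXS} (the inflow branch $g(t-x/a)$ is naturally defined for $x\le 0$, and a smooth extension of the outflow branch past $x=L$ is granted by the compatibility and $H^{k+1}$-regularity assumptions). The discrete error $e_j^n := u_j^n - U_j^n$ then satisfies, by linearity, the same numerical scheme \eqref{eq:scheme}--\eqref{eq:bcDirichlet} with vanishing initial data and three localized source terms: an interior truncation residual $\varepsilon_j^n$ obtained by plugging $U$ into \eqref{eq:scheme}, an inflow defect $\eta_\ell^n$ at the ghost cells $\ell=1-r,\dots,0$ measuring the mismatch between $U_\ell^n$ and the right-hand side of \eqref{eq:bcDirichlet}, and an outflow defect $\rho_\ell^n$ at the ghost cells $\ell=J+1,\dots,J+p$ measuring the mismatch between $U$ and \eqref{eq:bcNeumann}.

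The next step is to bound each defect. The interior one is $O(\Delta x^{k+1})$ per time step, using \eqref{Assu1} and the regularity of $u$. The outflow defect is $O(\Delta x^{k_b})$, since $D_-^{k_b}$ applied to the cell averages of a smooth function yields exactly that. The new ingredient is the inflow defect: using $u(t,x)=g(t-x/a)$ near the inflow boundary, Taylor-expanding in powers of $x$ about $0$, and integrating over each ghost cell gives
\[
\frac{1}{\Delta x}\int_{x_{\ell-1}}^{x_\ell} g(t^n-x/a)\,\mathrm{d}x \, = \, \sum_{\kappa=0}^{k-1} \frac{\Delta x^\kappa}{(\kappa+1)!\,(-a)^\kappa}\,\bigl(\ell^{\kappa+1}-(\ell-1)^{\kappa+1}\bigr)\,g^{(\kappa)}(t^n) + O(\Delta x^k),
\]
which matches \eqref{eq:bcDirichlet} up to the remainder; hence $\eta_\ell^n = O(\Delta x^k)$, which is precisely the motivation behind the choice \eqref{alpha}.

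With these bounds in hand, I then appeal to the stability theory of \cite{CL18}. The inflow closure \eqref{eq:bcDirichlet} is of Dirichlet type, hence unconditionally $\ell^2$-stable by Goldberg--Tadmor \cite{goldberg-tadmor1,goldberg-tadmor2}, so the inflow defect $\eta$ is transported by the scheme without amplification. The interior residual $\varepsilon$ is absorbed by the straightforward $\ell^2$-stability \eqref{Assu2}. The outflow defect $\rho$ is the delicate part: the sharp stability estimate of \cite{CL18} for the extrapolation condition, combined with a numerical boundary layer expansion, turns the pointwise bound $O(\Delta x^{k_b})$ on $\rho$ into an $L^\infty$-bound on $e$ of order $\Delta x^{k_b-1/2}$. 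Combining the three contributions and summing via discrete Duhamel over the $O(T/\Delta t)$ time steps yields \eqref{eq:convergenceresult}, with the exponential factor $e^{CT/L}$ arising from the discrete Gronwall bookkeeping.

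The main obstacle is precisely this outflow boundary layer analysis, which is the technical heart of \cite{CL18} and where the half-order loss is both necessary and (almost) sharp. Extending the theorem to $g\ne 0$ does not reopen that analysis; it only adds the two new ingredients above, namely the consistency of the inflow closure (automatic from \eqref{alpha}) and the observation that the Dirichlet inflow produces no boundary layer of its own because the boundary is noncharacteristic with $a>0$.
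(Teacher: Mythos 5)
Your outline identifies the right ingredients (consistency of the inverse Lax--Wendroff inflow values, Goldberg--Tadmor stability for the Dirichlet part, the extrapolation machinery of \cite{CL18}), and your Taylor computation of the inflow defect is exactly the paper's motivation for the choice \eqref{alpha}; but two steps, as written, do not hold up. First, your treatment of the outflow defect is mis-calibrated: you invoke ``a numerical boundary layer expansion'' to convert the $O(\Delta x^{k_b})$ extrapolation mismatch into the final $\ell^\infty$ bound. In the paper that expansion is used only for Theorem \ref{mainthm'}, it requires the extra structural hypothesis of Assumption \ref{assumroot} (a single root of $A$ on the unit circle), which is \emph{not} assumed in Theorem \ref{mainthm}, and when it applies it yields the rate $k_b$ (no half-order loss), not $k_b-1/2$. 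For Theorem \ref{mainthm} the outflow boundary is handled purely by the $\ell^2$ convergence estimate of \cite{CL18} (recalled as Theorem \ref{thm-cv-neumann}), giving rate $\min(k,k_b)$ in $\ell^\infty_n\ell^2_j$, and the $-1/2$ in \eqref{eq:convergenceresult} comes solely from the crude bound $\sup_j|b_j|\le\Delta x^{-1/2}(\sum_j\Delta x\,b_j^2)^{1/2}$. So your attribution of the half-order loss to the outflow boundary layer analysis is a conceptual inversion: the boundary layer argument is precisely the tool that removes that loss, and it is unavailable under the hypotheses of Theorem \ref{mainthm}.

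Second, you work directly on the interval and ``appeal to the stability theory of \cite{CL18}'' with inhomogeneous data at both ends simultaneously, summing by a discrete Duhamel/Gronwall argument. The stability estimates actually available are half-space estimates: Goldberg--Tadmor for Dirichlet data at the inflow, and the sharp trace estimate of \cite{CL18} for inhomogeneous extrapolation data at the outflow; no combined interval estimate with nonzero data at both boundaries is cited or proved, and this is exactly the difficulty the paper circumvents. The paper's proof splits $f$ with a cutoff $\chi(\cdot/L)$, applies the new inflow half-space result (Theorem \ref{thm-cv-dirichlet}) and the outflow half-space result of \cite{CL18} separately, and observes that by finite stencil/finite speed the superposition reproduces the interval solution only for $n\le N\sim L/\Delta x$ time steps; one must then restart the argument on successive time blocks, using the interval stability estimate of \cite[Proposition 4.1]{CL18} for the (homogeneous-data) error part, and the geometric accumulation of constants over the $O(T/L)$ blocks is what produces the factor $e^{CT/L}$ in \eqref{eq:convergenceresult}. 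Your proposal contains neither the cutoff/superposition nor the block iteration, and a generic Gronwall bookkeeping would not produce (nor justify) that structure, so the argument as stated has a genuine gap: either you must prove an interval stability estimate with simultaneous inhomogeneous Dirichlet and extrapolation boundary terms, or you must reinstate the superposition-in-space and iteration-in-time scheme of the paper.
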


\noindent Actually, the constant $C$ in \eqref{eq:convergenceresult} is independent of $L \ge 1$, which is consistent with the convergence result we shall 
prove below for the half-space problem on $\mathbb{R}^+$ with inflow at $x=0$. As in \cite{CL18}, the loss of $1/2$ in the rate of convergence of Theorem 
\ref{mainthm} looks somehow artificial and is mostly a matter of passing from the $\ell^\infty_n \ell^2_j$ topology to $\ell^\infty_{n,j}$. Our next result examines 
a situation where the optimal convergence rate $\min (k,k_b)$ can be obtained. In order to simplify (and shorten) the proof of Theorem \ref{mainthm'}, we 
only examine here the case of a half-space with extrapolation outflow conditions. The extension of the techniques to the case of an interval is left to the 
interested reader.

\begin{theorem}[Optimal rate of convergence for the outflow problem]
\label{mainthm'}
Under assumption \ref{ass:consistencystability} and under the additional assumption \ref{assumroot} stated hereafter, there exists a constant $C>0$ such 
that for any final time $T \ge 1$, any integer $J\in\mathbb{N}^*$, any data $f \in H^{k+1}((-\infty,L))$, the solution to the scheme:
\begin{equation}
\label{neumannsortie}
\left\{
\begin{aligned}
 &u_j^0 \, = \, \dfrac{1}{\Delta x} \, \int_{x_{j-1}}^{x_j} f(x) \, {\rm d}x \, , & & j \le J \, ,\\
 &(D_-^{k_b}u^n)_{J+\ell} \, = \, 0 \, , & & 0 \le n \le T/\Delta t \, , \quad \ell=1,\ldots,p \, ,\\
 &u_j^{n+1} \, = \, \sum_{\ell=-r}^p a_\ell \, u_{j+\ell}^n \, ,& & 0 \le n \le T/\Delta t -1 \, ,\quad j \le J \, ,
\end{aligned}
\right.
\end{equation}
satisfies the error estimate
$$
\sup_{0 \le n \le T/\Delta t} \, \sup_{j \le J} \, \left\vert u_j^n -\dfrac{1}{\Delta x} \, \int_{x_{j-1}}^{x_j} f(x-a\, t^n) \, {\rm d}x \right\vert 
\, \le C \, T \, \Delta x^{k_b} \, \Vert f \Vert_{H^{k+1}((0,L))} \, ,
$$
as long as $k_b<k$.
\end{theorem}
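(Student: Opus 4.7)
The plan is to follow the hint given just after Theorem \ref{mainthm}: the half-power loss in that theorem comes only from the discrete Sobolev embedding $\ell^2_j \hookrightarrow \ell^\infty_j$ applied to the boundary contribution of the error, and it can be recovered by absorbing that contribution into a discrete boundary-layer profile near the outflow cell $j=J$. First I set $\bar{u}_j^n := (\Delta x)^{-1} \int_{x_{j-1}}^{x_j} f(x-a\,t^n)\,\mathrm{d}x$ and $e_j^n := u_j^n - \bar{u}_j^n$. A standard Taylor expansion combined with the transport equation shows that $(e_j^n)$ solves \eqref{neumannsortie} with an interior consistency residual $\rho_j^n = O(\Delta x^{k+1})$ pointwise (hence $O(\Delta x^{k+1/2})$ in $\ell^2_j$) and a boundary defect $(D_-^{k_b} \bar{u}^n)_{J+\ell} = O(\Delta x^{k_b})$ for $\ell = 1,\ldots,p$. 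Since $k$ and $k_b$ are integers with $k_b<k$, the boundary term is the dominant obstruction to the optimal rate.

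The core of the argument is to construct a discrete boundary-layer corrector of the form
$$
w_j^n \, = \, \Delta x^{k_b} \, \sum_{\kappa} \kappa^{\,J-j}\, \varphi_\kappa(t^n) \, ,
\qquad j \le J \, ,
$$
where the sum runs over a finite family of complex numbers $\kappa$ with $|\kappa|<1$, namely the relevant roots of the characteristic equation associated with the scheme; their existence and nondegeneracy are the content of Assumption \ref{assumroot}. The time profiles $\varphi_\kappa$ are determined by requiring (i) that $w$ satisfies the homogeneous interior scheme $w_j^{n+1} = \sum_\ell a_\ell\, w_{j+\ell}^n$ and (ii) that the leading-order boundary defect of $\bar u$ be cancelled, i.e.\ $(D_-^{k_b}(\bar u + w)^n)_{J+\ell} = O(\Delta x^{k_b+1})$ for $\ell = 1,\ldots,p$. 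The geometric factor $\kappa^{J-j}$ concentrates $w$ near $j=J$, so that $\|w^n\|_{\ell^\infty_j} = O(\Delta x^{k_b})$ and $\|w^n\|_{\ell^2_j} = O(\Delta x^{k_b+1/2})$ uniformly in $n$.

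Setting $\tilde{e} := e - w$, the new error satisfies the scheme with an interior $\ell^2_j$-source of order $\Delta x^{k+1/2}$ (including the discrete time derivative of the $\varphi_\kappa$, controlled by a Gronwall loop) and a boundary defect reduced to order $\Delta x^{k_b+1}$. The $\ell^\infty_n \ell^2_j$ stability estimate for the extrapolation outflow scheme established in \cite{CL18} yields $\sup_n \|\tilde{e}^n\|_{\ell^2_j} \le C\,T\, \Delta x^{k_b+1}\, \|f\|_{H^{k+1}}$, and the discrete Sobolev embedding (cost $\Delta x^{-1/2}$) gives $\|\tilde{e}^n\|_{\ell^\infty_j} = O(T\, \Delta x^{k_b+1/2})$. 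Reassembling $e = \tilde{e} + w$ then produces the announced $O(T\,\Delta x^{k_b})$ bound. The main obstacle is the boundary-layer construction itself: one must check that the small linear system determining $\varphi_\kappa$ at each time step is uniformly invertible, i.e.\ that the number of roots $\kappa$ inside the unit disk matches the number of extrapolation constraints at $j = J+1,\ldots,J+p$ and that the associated $p \times p$ matrix is nondegenerate. This is the discrete analogue of a uniform Kreiss--Lopatinskii condition for the extrapolation boundary operator, and is precisely what Assumption \ref{assumroot} is designed to provide; once it is in hand, the remainder of the proof is essentially a combination of known $\ell^2$-stability with routine Taylor expansions.
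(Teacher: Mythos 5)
Your overall architecture is the same as the paper's: subtract the cell averages of the exact solution, add a corrector of size $\Delta x^{k_b}$ built from discrete steady states decaying as $j\to-\infty$ so as to kill the $O(\Delta x^{k_b})$ extrapolation defect at $j=J+1,\dots,J+p$, apply the $\ell^\infty_n\ell^2_j$ stability estimate of \cite{CL18}, and accept the $\Delta x^{-1/2}$ loss of the crude embedding since the corrector itself is only $O(\Delta x^{k_b})$ in $\ell^\infty_j$. The genuine gap is the step you defer: you assert that the solvability of the $p\times p$ linear system determining the boundary-layer amplitudes is ``precisely what Assumption \ref{assumroot} is designed to provide.'' It is not. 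Assumption \ref{assumroot} only yields, through Lemma \ref{lm:countproperty}, that the roots of the characteristic polynomial of modulus larger than $1$ number exactly $p$ counted with multiplicity, i.e.\ that the dimensions match. The nondegeneracy of the matrix $A_{k_b}$ with entries $(D_-^{k_b}\rho^{(\sigma,\nu)})_{J+\ell}$ --- a $k_b$-times differenced confluent Vandermonde matrix --- is a separate statement, and its proof (Lemma \ref{lm:bndlayer}, via divided differences, the Leibniz formula and reduction to a generalized Lagrange--Hermite interpolation problem) is the technical core of the whole argument; a dimension count alone does not exclude a kernel. Without this lemma your corrector is not known to exist.

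Two smaller points. First, your ansatz uses only pure geometric modes $\kappa^{J-j}\varphi_\kappa(t^n)$; when an unstable root is multiple you must also include the polynomial-weighted modes $(j-J)^\nu\kappa_\sigma^{j-J}$ of \eqref{eq:generating} (Lemma \ref{lm:ss}), otherwise you have fewer than $p$ free amplitudes and cannot meet the $p$ constraints (also, with the exponent $J-j$ your $\kappa$'s are the reciprocals of the characteristic roots, so ``roots with $|\kappa|<1$'' should read ``reciprocals of the roots of modulus $>1$''). Second, the exponent bookkeeping for the time variation of the corrector is off: $\frac{1}{\Delta t}\bigl(w_j^{n+1}-w_j^n\bigr)$ is $O(\Delta x^{k_b})$ pointwise, hence $O(\Delta x^{k_b+1/2})$ in the weighted $\ell^2_j$ norm, not $O(\Delta x^{k+1/2})$; consequently the stability estimate gives $\sup_n\|\tilde e^n\|_{\ell^2_j}=O\bigl(T\,\Delta x^{k_b+1/2}\bigr)$ rather than $O\bigl(T\,\Delta x^{k_b+1}\bigr)$. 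This does not harm the final rate --- the embedding still yields $O(T\,\Delta x^{k_b})$ and the corrector is $O(\Delta x^{k_b})$ in $\ell^\infty_j$ --- but the intermediate claims should be corrected. Note finally that it is simpler to enforce the boundary cancellation exactly, $(D_-^{k_b}\varepsilon^n)_{J+\ell}=0$, as the paper does, so that the homogeneous-boundary stability estimate \eqref{eq:stabilityhomogeneous} applies directly instead of a version with boundary forcing.
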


In other words, the technical assumption \ref{assumroot} hereafter, which is verified on many examples such as the Lax-Wendroff and $O3$ schemes, 
allows to recover the optimal rate $k_b=\min (k_b,k)$ in the case $k_b<k$. Of course, one would also like to improve the rate $\min (k_b,k)-1/2$ in the 
case $k_b=k$, which is clearly the most natural choice. However, in that case, both the interior and boundary consistency errors scale like $\Delta x^k$ 
and, in the framework of assumption \ref{ass:consistencystability}, stability in the interior domain is available only in the $\ell^2_j$ topology, so it is quite 
difficult to derive the convergence rate $k$ in the $\ell^\infty_j$ topology. Theorem \ref{mainthm'} already indicates that combining the approach of 
\cite{CL18} with other techniques (here, boundary layer expansions) may improve some results. We hope to deal with the case $k_b=k$ in the future.

\section{Convergence analysis for the inverse Lax-Wendroff method}

This Section is devoted to the proof of Theorem \ref{mainthm}. In order to shorten the exposition, we shall use some results of \cite{CL18} as a black box 
and we refer the interested reader to \cite{CL18} for more details. Following \cite{CL18}, we shall prove Theorem \ref{mainthm} by using a stability estimate 
for \eqref{eq:scheme}, \eqref{eq:bcNeumann}, \eqref{eq:bcDirichlet} and a superposition argument, which amounts to considering separately two half-space 
problems: one in which there is only inflow at $x=0$, and one for which there is only outflow at $x=L$. The novelty here is the nonzero inflow source term so 
we first deal with that case.

\subsection{Convergence analysis on a half-line for the inflow problem}

We focus here on the inflow source term, and therefore start by proving the main convergence estimate that is the new ingredient for the proof of Theorem 
\ref{mainthm}.

\begin{theorem}[Convergence estimate for the inflow problem]
\label{thm-cv-dirichlet}
Under assumption \ref{ass:consistencystability}, there exists a constant $C>0$ such that for any final time $T \ge 1$, for any $J\in\mathbb{N}^*$, for any initial 
condition $f \in H^{k+1}((0,+\infty))$ and boundary source term $g \in H^{k+1}((0,T))$ satisfying the compatibility conditions:
\begin{equation}
\label{compatiblite}
\forall \, m=0,\dots,k \, ,\quad f^{(m)}(0) \, = \, (-a)^{-m} \, g^{(m)}(0) \, ,
\end{equation}
the solution $(u_j^n)_{j \ge 1-r,n \in\mathbb{N}}$ to the numerical scheme:
\begin{align}
\label{demi-espace-1}
\begin{cases}
u_j^0 \, = \, \dfrac{1}{\Delta x} \, {\displaystyle \int_{x_{j-1}}^{x_j}} f(x) \, {\rm d}x \, ,& j \ge 1 \, ,\\
u^n_\ell \, = \, {\displaystyle \sum_{\kappa=0}^{k-1}} \dfrac{\Delta x^\kappa}{(\kappa+1) \, ! \, (-a)^\kappa} \, \big( \ell^{\kappa+1} -(\ell-1)^{\kappa+1} \big) 
\, g^{(\kappa)}(t^n)  \, ,& 0 \le n \le T/\Delta t \, , \quad \ell=1-r,\dots,0 \, ,\\
u_j^{n+1} ={\displaystyle \sum_{\ell=-r}^p} a_\ell \, u_{j+\ell}^n \, ,& 0 \le n \le T/\Delta t-1 \, , \quad j \ge 1 \, ,
\end{cases}
\end{align}
satisfies:
$$
\sup_{0 \le n \le T/\Delta t} \, \left( \, \sum_{j \ge 1} \Delta x \, \left( u_j^n -\dfrac{1}{\Delta x} \, \int_{x_{j-1}}^{x_j} u(t^n,x) \, {\rm d}x \right)^2 \, \right)^{1/2} 
\, \le \, C \, T \, \, \Delta x^k \, \big( \Vert f \Vert_{H^{k+1}((0,+\infty))} +\Vert g \Vert_{H^{k+1}((0,T))} \big) \, ,
$$
where $u$ is the exact solution to the half-line transport problem:
\begin{equation}
\label{transport'}
\begin{cases}
\partial_t u +a \, \partial_x u \, = \, 0 \, , & t \in (0,T) \, , \, x \ge 0 \, ,\\
u(0,x) \, = \, f(x) \, ,& x \ge 0 \, ,\\
u(t,0) \, = \, g(t) \, ,& t \in (0,T) \, .
\end{cases}
\end{equation}
\end{theorem}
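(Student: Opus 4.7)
The plan is to study the difference $e_j^n:=u_j^n-\bar u_j^n$, with $\bar u_j^n:=\frac{1}{\Delta x}\int_{x_{j-1}}^{x_j}u(t^n,x)\,{\rm d}x$, as the solution of the same scheme \eqref{eq:scheme} driven by an interior source $\tau_j^n$ on $j\geq 1$ and by a boundary source $\eta_\ell^n$ on the ghost cells $\ell=1-r,\dots,0$, and then to invoke the sharp $\ell^2$ stability estimate for nonhomogeneous Dirichlet data \cite{goldberg-tadmor1,goldberg-tadmor2,jfcag} --- the black-box result we borrow from \cite{CL18}. To give meaning to $\bar u_j^n$ on the ghost cells, I extend the exact solution to $x<0$ by the characteristic formula $u(t,x):=g(t-x/a)$, after a smooth extension of $g$ beyond $T$ that preserves its $H^{k+1}$ norm up to a constant. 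The compatibility conditions \eqref{compatiblite} of order $k$ then guarantee $H^{k+1}$ regularity of the resulting $u$ across the corner $(t,x)=(0,0)$ and along the characteristic $x=a t$, with norm controlled by $\|f\|_{H^{k+1}}+\|g\|_{H^{k+1}}$.

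Next I would quantify the two consistency errors. Taylor expanding $x\mapsto g(t^n-x/a)$ at $x=0$ and integrating term-wise over the ghost cell $(x_{\ell-1},x_\ell)$ gives
$$
\bar u_\ell^n \, = \, \sum_{\kappa\ge 0}\frac{\Delta x^\kappa}{(\kappa+1)!(-a)^\kappa}\bigl(\ell^{\kappa+1}-(\ell-1)^{\kappa+1}\bigr)\, g^{(\kappa)}(t^n),
$$
so that $\eta_\ell^n:=\bar u_\ell^n-u_\ell^n$ is exactly the tail of this series starting at $\kappa=k$; the integral form of the Taylor remainder then yields the square-summable bound
$$
\Delta t\, \sum_{m=0}^{n-1}\sum_{\ell=1-r}^{0}|\eta_\ell^m|^2 \, \leq \, C\, \Delta x^{2k}\, \|g\|_{H^k((0,T))}^2.
$$
For the interior source $\tau_j^n := \bar u_j^{n+1}-\sum_\ell a_\ell\, \bar u_{j+\ell}^n$, the order-$k$ consistency relations \eqref{Assu1} combined with the PDE give, by standard Taylor arguments,
$$
\sum_{j\geq 1}\Delta x\, |\tau_j^m|^2 \, \leq \, C\, \Delta x^{2k+2}\, \|u(t^m,\cdot)\|_{H^{k+1}}^2.
$$

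The final step is to apply the sharp semigroup estimate for \eqref{eq:scheme} with nonhomogeneous Dirichlet data on the ghost cells; following \cite{CL18,jfcag}, this reads
$$
\Bigl(\sum_{j\geq 1}\Delta x\, |e_j^n|^2\Bigr)^{1/2} \, \leq \, C\, \sum_{m=0}^{n-1}\Bigl(\sum_{j\geq 1}\Delta x\, |\tau_j^m|^2\Bigr)^{1/2} + C\, \Bigl(\Delta t\sum_{m=0}^{n-1}\sum_{\ell=1-r}^{0}|\eta_\ell^m|^2\Bigr)^{1/2}.
$$
Plugging in the two bounds from the previous step and using $n\Delta t\leq T$ yields the announced estimate. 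The main obstacle is precisely this \emph{sharp} form of stability: summing the boundary forcing in $\ell^1_t$ (the naive choice) would cost an extra $\Delta x^{-1/2}$ and lower the rate to $k-1/2$, whereas Goldberg--Tadmor stability provides $\ell^2_t$ control of the boundary trace, which exactly matches the $O(\Delta x^k)$ pointwise size of $\eta_\ell^m$ and the $L^2_t$ integrability of $g^{(k)}$. A secondary technical point is the $H^{k+1}$ regularity of the extended exact solution across the ghost zone, which rests on the compatibility conditions \eqref{compatiblite}.
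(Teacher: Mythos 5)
Your proposal is correct and follows essentially the same route as the paper: extend the data across the corner (the paper glues $f$ and $g_\flat$ into $f_\sharp\in H^{k+1}(\mathbb{R})$, which is the same as your characteristic extension of $u$ to $x<0$), bound the ghost-cell mismatch by the order-$k$ Taylor remainder in $\ell^2_t$ and the interior truncation error at $O(\Delta x^k)$, and conclude with the Goldberg--Tadmor unconditional stability estimate whose $\ell^2$-in-time control of the Dirichlet forcing is exactly the point you single out. The only cosmetic differences are that the paper estimates the interior consistency error via Plancherel rather than pointwise Taylor arguments, and writes the remainder directly in integral form instead of as the tail of a formal Taylor series (which, as you note, is what the integral remainder makes rigorous for $g\in H^{k+1}$).
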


\begin{proof}
For convenience, we first extend $g$ into a function $g_\flat \in H^{k+1}((0,+\infty))$ and then define:
\begin{equation*}
\forall \, x \in \mathbb{R} \, ,\quad f_\sharp (x) \, := \, \begin{cases}
f(x) \, , & \text{\rm if $x>0$,} \\
g_\flat (-x/a) \, , & \text{\rm if $x<0$.}
\end{cases}
\end{equation*}
Since $f$ and $g$ satisfy the compatibility conditions \eqref{compatiblite}, we have $f_\sharp \in H^{k+1}(\mathbb{R})$, and the exact solution $u$ to 
\eqref{transport'} is given by:
$$
\forall \, (t,x) \in [0,T] \times (0,+\infty) \, ,\quad u(t,x) \, = \, f_\sharp (x-a\, t) \, .
$$
Let us now define:
$$
\forall \, j \in \mathbb{Z} \, ,\quad \forall \, n \in \mathbb{N} \, ,\quad w_j^n \, := \, \dfrac{1}{\Delta x} \, \int_{x_{j-1}}^{x_j} f_\sharp(x-a\, t^n) \, {\rm d}x \, ,
$$
which corresponds to the cell average of the exact solution to \eqref{transport'}. With $(u_j^n)_{j \ge 1-r,0 \le n \le T/\Delta t}$ the solution to the numerical 
scheme \eqref{demi-espace-1}, we define the error $\varepsilon_j^n :=u_j^n -w_j^n$, that is a solution to:
\begin{equation}
\label{demi-espace-erreur}
\begin{cases}
\varepsilon_j^0 \, = \, 0 \, ,& j \ge 1 \, ,\\
\varepsilon^n_\ell \, = \, u^n_\ell-w^n_\ell  \, ,& 0 \le n \le T/\Delta t \, , \quad \ell=1-r,\dots,0 \, ,\\
\varepsilon_j^{n+1} ={\displaystyle \sum_{\ell=-r}^p} a_\ell \, \varepsilon_{j+\ell}^n +\Delta t \, e_j^{n+1} \, ,& 0 \le n \le T/\Delta t-1 \, , \quad j \ge 1 \, .
\end{cases}
\end{equation}

The interior consistency error $(e_j^{n+1})_{j \ge 1,0 \le n \le T/\Delta t-1}$ is easily estimated by means of the Cauchy-Schwarz inequality and Fourier analysis:
\begin{align*}
\sum_{j \ge 1} \Delta x \, (e_j^{n+1})^2 \, = & \, \dfrac{\Delta x}{\Delta t^2} \, \sum_{j \ge 1} \left( w_j^{n+1} -\sum_{\ell=-r}^p a_\ell \, w_{j+\ell}^n \right)^2 \\
= & \, \dfrac{1}{\Delta x \, \Delta t^2} \, \sum_{j \ge 1} \left( 
\int_{x_{j-1}}^{x_j} \left( f_\sharp(x-a\, t^n -a\, \Delta t) -\sum_{\ell=-r}^p a_\ell \, f_\sharp(x-a\, t^n+\ell \, \Delta x) \right) \, {\rm d}x \right)^2 \\
\le & \, \dfrac{1}{\Delta t^2} \, \int_{\mathbb{R}} 
\left( f_\sharp(x-a\, t^n -a\, \Delta t) -\sum_{\ell=-r}^p a_\ell \, f_\sharp(x-a\, t^n+\ell \, \Delta x) \right)^2 \, {\rm d}x \\
\le & \, \dfrac{1}{2 \, \pi \, \Delta t^2} \, \int_{\mathbb{R}} 
\Big| {\rm e}^{-i \, a \, \lambda \, \Delta x \, \xi} - \sum_{\ell=-r}^p a_\ell \,  {\rm e}^{i \, \ell \, \Delta x \, \xi} \Big|^2 \, \big| \widehat{f_\sharp}(\xi) \big|^2 \, {\rm d}\xi 
\, \le \, C \, \Delta x^{2 \, k} \, \int_{\mathbb{R}} \xi^{2\, (k+1)} \, \big| \widehat{f_\sharp}(\xi) \big|^2 \, {\rm d}\xi \, ,
\end{align*}
where the final inequality comes from assumption \ref{ass:consistencystability} and the fact that the ratio $\Delta t/\Delta x$ is constant. Going back to the 
definition of $f_\sharp$, we have obtained the bound:
\begin{equation}
\label{estimerreur-1}
\sup_{0 \le n \le T/\Delta t-1} \, \left( \sum_{j \ge 1} \Delta x \, (e_j^{n+1})^2 \right)^{1/2} \, \le \, C \, \Delta x^k \, 
\big( \Vert f \Vert_{H^{k+1}((0,+\infty))} +\Vert g \Vert_{H^{k+1}((0,T))} \big) \, ,
\end{equation}
for some constant $C$ that is independent of the final time $T \ge 1$ and the data $f$ and $g$.

We now turn to the boundary errors in \eqref{demi-espace-erreur}, and wish to estimate the following quantities:
$$
\sum_{0 \le n \le T/\Delta t-1} \Delta t \, \big( u_\ell^n-w_\ell^n \big)^2 \, ,\quad \ell=1-r,\dots,0 \, .
$$
Let us consider an integer $n$ such that $0 \le n \le T/\Delta t-1$. From the definition of $w_\ell^n$, $\ell \le 0$, we have:
\begin{align*}
u_\ell^n-w_\ell^n \, =& \, 
\sum_{\kappa=0}^{k-1} \dfrac{\Delta x^\kappa}{(\kappa+1) \, ! \, (-a)^\kappa} \, \big( \ell^{\kappa+1} -(\ell-1)^{\kappa+1} \big) \, g^{(\kappa)}(t^n) 
-\dfrac{1}{\Delta x} \, \int_{x_{\ell-1}}^{x_\ell} g_\flat(t^n-x/a) \, {\rm d}x \\
=& \, -\dfrac{(-a)^{-k}}{\Delta x} \, 
\int_{x_{\ell-1}}^{x_\ell} x^k \, \int_0^1 \dfrac{y^{k-1}}{(k-1) \, !} \, g_\flat^{(k)} \left( t^n -\dfrac{x \, y}{a} \right) \, {\rm d}y \, {\rm d}x \, ,
\end{align*}
where we have used the Taylor formula\footnote{This is precisely at this point of the analysis that the definition of the coefficients $\alpha_{\kappa,\ell}$ 
in the inverse Lax-Wendroff method arises. Our choice in \eqref{eq:bcDirichlet} is motivated by the fact that we compare the numerical solution with the 
cell average of the exact solution.}. By the Cauchy-Schwarz inequality, we get:
$$
\big( u_\ell^n-w_\ell^n \big)^2 \, \le \, \dfrac{C}{\Delta x} \, \int_{x_{\ell-1}}^{x_\ell} \int_0^1 x^{2\, k} y^{2\, (k-1)} \, 
g_\flat^{(k)} \left( t^n -\dfrac{x \, y}{a} \right)^2 \, {\rm d}y \, {\rm d}x \, ,
$$
and we now apply the change of variables $(x,y) \rightarrow (x\, y,x)$ to get:
$$
\big( u_\ell^n-w_\ell^n \big)^2 \, \le \, \int_{x_{\ell-1}}^{x_\ell} \left( \int_v^0 |v| \, |u|^{2\, (k-1)} \, g_\flat^{(k)} \left( t^n -\dfrac{u}{a} \right)^2 \, {\rm d}u 
\right) \, {\rm d}v \, .
$$
Restricting to $\ell=1-r,\dots,0$, we have:
$$
\sum_{\ell=1-r}^0 \big( u_\ell^n-w_\ell^n \big)^2 \, \le \, C \, \Delta x^{2\, k-1} \, \int_0^{r\, \Delta x/a} g_\flat^{(k)} (t^n+\tau)^2 \, {\rm d}\tau \, .
$$
Summing now with respect to $n$, we end up with the estimate:
\begin{equation}
\label{estimerreur-2}
\sum_{0 \le n \le T/\Delta t-1} \Delta t \, \sum_{\ell=1-r}^0 (\varepsilon_\ell^n)^2 \, \le \, C \, \Delta x^{2\, k} \, \int_0^{+\infty} g_\flat^{(k)} (t)^2 \, {\rm d}t 
\, \le \, C \, \Delta x^{2\, k} \, \Vert g \Vert_{H^k((0,T))}^2 \, ,
\end{equation}
for some constant $C$ that is independent of the final time $T \ge 1$ and the data $f$ and $g$.

We now apply the main stability estimate for the error problem \eqref{demi-espace-erreur}, for which we refer to the seminal papers 
\cite{goldberg-tadmor1,goldberg-tadmor2} and to the more recent works \cite{jfcag,CL18}:
\begin{multline*}
\sup_{0 \le n \le T/\Delta t} \left( \sum_{j \ge 1} \Delta x \, (\varepsilon_j^n)^2 \right)^{1/2} \, \le \, C \, \left\{ 
T \, \sup_{1 \le n \le T/\Delta t} \, \left( \sum_{j \ge 1} \Delta x \, (e_j^n)^2 \right)^{1/2} \right. \\
\left. + \left( \sum_{0 \le n \le T/\Delta t-1} \Delta t \, \sum_{\ell=1-r}^0 (\varepsilon_\ell^n)^2 \right)^{1/2} \right\} \, .
\end{multline*}
The conclusion of Theorem \ref{thm-cv-dirichlet} then comes from the combination of the estimates \eqref{estimerreur-1} and \eqref{estimerreur-2}.
\end{proof}

\subsection{Convergence estimate for the outflow problem}

We recall the convergence estimate obtained in \cite{CL18} for the complementary half-space problem where extrapolation conditions are prescribed at the 
outflow boundary. We refer the reader to \cite{CL18} for the details.

\begin{theorem}[Convergence estimate for the outflow problem \cite{CL18}]
\label{thm-cv-neumann}
Under assumption \ref{ass:consistencystability}, there exists a constant $C>0$ such that for any final time $T \ge 1$, for any $J\in\mathbb{N}^*$ and for any 
initial condition $f \in H^{k+1}((-\infty,L))$, the solution $(u_j^n)_{j \le J+p,0 \le n \le T/\Delta t}$ to the numerical scheme:
\begin{equation*}
\begin{cases}
u_j^0 \, = \, \dfrac{1}{\Delta x} \, {\displaystyle \int_{x_{j-1}}^{x_j}} f(x) \, {\rm d}x \, ,& j \le J \, ,\\
(D_{-}^{k_b}u^n)_{J+\ell} \, = \, 0 \, ,& 0 \le n \le T/\Delta t \, , \quad \ell=1,\dots,p \, ,\\
u_j^{n+1} ={\displaystyle \sum_{\ell=-r}^p} a_\ell \, u_{j+\ell}^n \, ,& 0 \le n \le T/\Delta t-1 \, , \quad j \le J \, ,
\end{cases}
\end{equation*}
satisfies:
$$
\sup_{0 \le n \le T/\Delta t} \, \left( \, \sum_{j \le J} \Delta x \, \left( u_j^n -\dfrac{1}{\Delta x} \, \int_{x_{j-1}}^{x_j} f(x-a \, t^n) \, {\rm d}x \right)^2 \, \right)^{1/2} 
\, \le \, C \, T \, \Delta x^{\min (k,k_b)} \, \Vert f \Vert_{H^{k+1}((-\infty,L))} \, .
$$
\end{theorem}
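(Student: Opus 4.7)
The plan is to mirror the structure of the proof of Theorem \ref{thm-cv-dirichlet}: define an error $\varepsilon_j^n := u_j^n - w_j^n$ between the numerical solution and the cell-averages $w_j^n := \frac{1}{\Delta x} \int_{x_{j-1}}^{x_j} f(x-a\,t^n)\,{\rm d}x$ of the exact solution, then show that $\varepsilon$ solves the \emph{same} scheme but driven by two small source terms (interior consistency error and boundary extrapolation error), and finally close the argument with the unconditional stability estimate for extrapolation outflow conditions proved in \cite{CL18}. Since $f\in H^{k+1}((-\infty,L))$, I would first extend $f$ to an $H^{k+1}$ function on all of $\mathbb{R}$ with norm controlled by $\Vert f\Vert_{H^{k+1}((-\infty,L))}$; then $w_j^n$ makes sense for all $j\in\mathbb{Z}$ and satisfies the full-line recursion $w_j^{n+1} = \sum_{\ell=-r}^p a_\ell w_{j+\ell}^n + \Delta t\,\tilde e_j^{n+1}$, and consequently $\varepsilon$ satisfies zero initial data, the interior equation with source $-\tilde e_j^{n+1}$, and the \emph{boundary} residual $(D_-^{k_b} \varepsilon^n)_{J+\ell} = -(D_-^{k_b} w^n)_{J+\ell}$ for $\ell=1,\ldots,p$.

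For the interior consistency error, I would reuse verbatim the Fourier/Plancherel computation of Theorem \ref{thm-cv-dirichlet} (no boundary is involved here since $w$ is defined for every $j\in\mathbb{Z}$), which yields
$$
\sup_{0\le n \le T/\Delta t-1} \Bigl( \sum_{j\le J} \Delta x\, (\tilde e_j^{n+1})^2 \Bigr)^{1/2} \le C\,\Delta x^k \, \Vert f\Vert_{H^{k+1}(\mathbb{R})}.
$$
The boundary residual is the new piece. The operator $D_-^{k_b}$ applied to the cell average $w_j^n$ of the smooth function $x\mapsto f(x-a\,t^n)$ is precisely a $k_b$-th order finite difference that annihilates polynomials of degree $<k_b$; by standard Peano/Taylor remainder estimates it behaves like $\Delta x^{k_b} \cdot (\text{averaged } k_b\text{-th derivative of } f(\cdot-a\,t^n))$ on cells near $x_J=L$. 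Summing in $n$ and applying a change of variables $\tau = a\,t^n$ to convert $\ell^2_n$ norms of derivatives of $f(\cdot-a\,t^n)$ into $L^2$ norms of $f^{(k_b)}$, exactly as in the Dirichlet inflow analysis, I would obtain
$$
\sum_{0\le n \le T/\Delta t-1} \Delta t \sum_{\ell=1}^p \bigl((D_-^{k_b} w^n)_{J+\ell}\bigr)^2 \le C\, \Delta x^{2k_b} \, \Vert f\Vert_{H^{k_b}((-\infty,L))}^2,
$$
provided $k_b\le k$ so that $f$ has enough regularity (the assumption $k_b<k$ in the statement makes this comfortable). Note that here $w$ and $f$ are supported for $x\le L+$ a few cells, so the integral in $\tau$ is automatically taken over a bounded interval of length $O(T)$, producing only constants independent of $L$.

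Plugging both bounds into the stability estimate from \cite{CL18} for the extrapolation outflow problem,
$$
\sup_{0 \le n \le T/\Delta t} \Bigl( \sum_{j\le J} \Delta x\, (\varepsilon_j^n)^2 \Bigr)^{1/2} \le C \Bigl\{ T\sup_n \bigl(\textstyle\sum_j \Delta x\,(\tilde e_j^n)^2\bigr)^{1/2} + \bigl(\textstyle\sum_n \Delta t\sum_\ell ((D_-^{k_b}w^n)_{J+\ell})^2\bigr)^{1/2} \Bigr\},
$$
yields the claimed bound $C\,T\,\Delta x^{\min(k,k_b)}\Vert f\Vert_{H^{k+1}}$. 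The main obstacle I anticipate is the boundary consistency estimate: one must carefully exploit that $D_-^{k_b}$ annihilates polynomials so that the remainder really involves $f^{(k_b)}$ (not a lower derivative amplified by a power of the grid), and perform the change of variables that transfers discrete time-summation into a continuous $L^2$ integral of $f^{(k_b)}$ against a bounded measure. Everything else is a direct transcription of the inflow argument together with the black-box stability result imported from \cite{CL18}.
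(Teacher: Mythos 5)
The paper offers no proof of Theorem \ref{thm-cv-neumann}: it is imported verbatim from \cite{CL18} and used as a black box, so the only comparison available is with that reference and with the paper's parallel inflow argument (Theorem \ref{thm-cv-dirichlet}). Your plan coincides with both: error measured against cell averages of the exact solution, Fourier/Plancherel interior consistency of order $\Delta x^{k}$, boundary residual $(D_-^{k_b}\omega^n)_{J+\ell}=O(\Delta x^{k_b})$ (this is \cite[Lemma 3.6]{CL18}, which the paper itself invokes in Section 3), and the stability estimate for extrapolation outflow conditions with interior and boundary source terms; just be aware that the stability statements actually reproduced in this paper (\eqref{eq:stabilityhomogeneous} and \cite[Proposition 4.1]{CL18}) are homogeneous-boundary versions, so you must quote the inhomogeneous trace-type estimate from \cite{CL18} itself, as you implicitly do. One small correction: Theorem \ref{thm-cv-neumann} does not assume $k_b<k$ (that hypothesis belongs to Theorem \ref{mainthm'}), so for $k_b\ge k$ your Peano-kernel bound on the boundary residual, which uses $f^{(k_b)}$, should be replaced by writing $D_-^{k_b}=D_-^{k_b-k-1}D_-^{k+1}$ and estimating with $f^{(k+1)}$ only; this yields a boundary contribution of order $\Delta x^{k+1}$ after time summation and hence still the claimed rate $\min(k,k_b)=k$ in that regime.
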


\subsection{Proof of Theorem \ref{mainthm}}

It remains to combine the convergence estimates of Theorems \ref{thm-cv-dirichlet} and \ref{thm-cv-neumann} to prove Theorem \ref{mainthm}. We use 
a slight modification of the superposition argument in \cite{CL18} in order to cope with the nonzero incoming condition, but we basically follow the same 
lines. Let us consider a final time $T \ge 1$ and some data $f \in H^{k+1}((0,L))$, $g \in H^{k+1}((0,T))$ that satisfy the compatibility conditions stated in 
Theorem \ref{mainthm}. We consider some function $\chi \in \mathcal{C}^\infty(\mathbb{R})$ such that $\chi(x)=0$ if $x \le 1/3$ and $\chi(x)=1$ if $x \ge 
2/3$. We then decompose the initial condition $f$ as:
$$
\forall \, x \in (0,L) \, ,\quad f(x) \, = \, (1-\chi(x/L)) \, f(x) +\chi(x/L) \, f(x) \, .
$$
Since $(1-\chi(\cdot/L)) \, f$ vanishes on $(2\, L/3,L)$, we can extend it by zero to the interval $(L,+\infty)$ and thus consider $(1-\chi(\cdot/L)) \, f$ as an 
element of $H^{k+1}((0,+\infty))$. Furthermore, the functions $(1-\chi(\cdot/L)) \, f$ and $g$ satisfy the same compatibility conditions as $f$ and $g$ at 
$t=x=0$. We can  thus apply Theorem \ref{thm-cv-dirichlet} to the sequence $(v_j^n)_{j \ge 1-r,0 \le n \le T/\Delta t}$ that is defined as the solution to the 
numerical scheme:
\begin{equation*}
\begin{cases}
v_j^0 \, = \, \dfrac{1}{\Delta x} \, {\displaystyle \int_{x_{j-1}}^{x_j}} (1-\chi(x/L)) \, f(x) \, {\rm d}x \, ,& j \ge 1 \, ,\\
v^n_\ell \, = \, {\displaystyle \sum_{\kappa=0}^{k-1}} \dfrac{\Delta x^\kappa}{(\kappa+1) \, ! \, (-a)^\kappa} \, \big( \ell^{\kappa+1} -(\ell-1)^{\kappa+1} \big) 
\, g^{(\kappa)}(t^n)  \, ,& 0 \le n \le T/\Delta t \, , \quad \ell=1-r,\dots,0 \, ,\\
v_j^{n+1} ={\displaystyle \sum_{\ell=-r}^p} a_\ell \, v_{j+\ell}^n \, ,& 0 \le n \le T/\Delta t-1 \, , \quad j \ge 1 \, .
\end{cases}
\end{equation*}
We obtain the estimate:
\begin{equation}
\label{erreur-vjn}
\sup_{0 \le n \le T/\Delta t} \, \left( \, \sum_{j \ge 1} \Delta x \, \left( v_j^n -\dfrac{1}{\Delta x} \, \int_{x_{j-1}}^{x_j} v(t^n,x) \, {\rm d}x \right)^2 \, \right)^{1/2} 
\, \le \, C \, T \, \, \Delta x^k \, \big( \Vert f \Vert_{H^{k+1}((0,L))} +\Vert g \Vert_{H^{k+1}((0,T))} \big) \, ,
\end{equation}
where $v$ is the exact solution to the transport problem:
\begin{equation*}
\begin{cases}
\partial_t v +a \, \partial_x v \, = \, 0 \, , & t \in (0,T) \, , \quad x \ge 0 \, ,\\
v(0,x) \, = \, (1-\chi(x/L)) \, f(x) \, , & x \ge 0 \, ,\\
v(t,0) \, = \, g(t) \, ,& t \in (0,T) \, .
\end{cases}
\end{equation*}
Similarly, we can view $\chi(\cdot/L) \, f$ as an element of $H^{k+1}((-\infty,L))$ that vanishes on $(-\infty,L/3)$. Theorem \ref{thm-cv-neumann} then shows 
that the solution $(w_j^n)_{j \le J+p,0 \le n \le T/\Delta t}$ to the numerical scheme:
\begin{equation*}
\begin{cases}
w_j^0 \, = \, \dfrac{1}{\Delta x} \, {\displaystyle \int_{x_{j-1}}^{x_j}} \chi(x/L) \, f(x) \, {\rm d}x \, ,& j \le J \, ,\\
(D_{-}^{k_b}w^n)_{J+\ell} \, = \, 0 \, ,& 0 \le n \le T/\Delta t \, , \quad \ell=1,\dots,p \, ,\\
w_j^{n+1} ={\displaystyle \sum_{\ell=-r}^p} a_\ell \, w_{j+\ell}^n \, ,& 0 \le n \le T/\Delta t-1 \, , \quad j \le J \, ,
\end{cases}
\end{equation*}
satisfies:
\begin{multline}
\label{erreur-wjn}
\sup_{0 \le n \le T/\Delta t} \, \left( \, \sum_{j \le J} \Delta x \, 
\left( w_j^n -\dfrac{1}{\Delta x} \, \int_{x_{j-1}}^{x_j} \chi((x-a \, t^n)/L) \, f(x-a \, t^n) \, {\rm d}x \right)^2 \, \right)^{1/2} \\
\le \, C \, T \, \Delta x^{\min (k,k_b)} \, \Vert f \Vert_{H^{k+1}((0,L))} \, .
\end{multline}

Using the support property of the function $\chi$ and the fact that the scheme \eqref{eq:scheme} is explicit with a finite stencil, we find that for all time iteration 
$n$ up to the threshold:
$$
N \, := \, \min \left( {\rm E} \left( \dfrac{J/3-k_b}{r} \right) \, , \, {\rm E} \left( \dfrac{{\rm E}(J/3)}{p} \right) \right) \, ,
$$
there holds:
$$
w_{1-r}^n \, = \, \cdots \, = \, w_0^n \, = \, 0 \, ,\quad v_{J+1-k_b}^n \, = \, \cdots \, = \, v_{J+p}^n \, = \, 0 \, .
$$
In particular, the solution $(u_j^n)_{1-r \le j \le J+p,0 \le n \le T/\Delta t}$ to \eqref{eq:scheme}, \eqref{initialdata}, \eqref{eq:bcNeumann}, \eqref{eq:bcDirichlet} 
satisfies:
$$
\forall \, n \, = \, 0,\dots,N \, ,\quad \forall \, j \, =1-r,\dots,J+p \, ,\quad u_j^n \, = \, v_j^n +w_j^n \, .
$$
Combining then the error estimates \eqref{erreur-vjn} and \eqref{erreur-wjn}, we obtain:
\begin{multline}
\label{erreur-ujn}
\sup_{0 \le n \le N} \, \left( \, \sum_{1 \le j \le J} \Delta x \, \left( u_j^n -\dfrac{1}{\Delta x} \, \int_{x_{j-1}}^{x_j} u(t^n,x) \, {\rm d}x \right)^2 \, \right)^{1/2} \\
\le \, C_1 \, T \, \, \Delta x^{\min (k,k_b)} \, \big( \Vert f \Vert_{H^{k+1}((0,L))} +\Vert g \Vert_{H^{k+1}((0,T))} \big) \, ,
\end{multline}
where $u$ is the exact solution to \eqref{TBE}.

It remains, as in \cite{CL18}, to iterate in time the error estimate \eqref{erreur-ujn}. We follow again the argument in \cite{CL18}. For any time iteration $n$ 
between $N$ and $2\, N$, we split the solution $(u_j^n)_{1-r \le j \le J+p,0 \le n \le T/\Delta t}$ to \eqref{eq:scheme}, \eqref{initialdata}, \eqref{eq:bcNeumann}, 
\eqref{eq:bcDirichlet} as the sum of the solution to the problem:
\begin{equation*}
\begin{cases}
\tilde{u}_j^N \, = \, \dfrac{1}{\Delta x} \, {\displaystyle \int_{x_{j-1}}^{x_j}} u(t^N,x) \, {\rm d}x \, ,& 1 \le j \le J \, ,\\
(D_{-}^{k_b} \tilde{u}^{N+n})_{J+\ell} \, = \, 0 \, ,& 0 \le n \le N \, , \quad \ell=1,\dots,p \, ,\\
\tilde{u}^{N+n}_\ell \, = \, {\displaystyle \sum_{\kappa=0}^{k-1} \dfrac{\Delta x^\kappa}{(\kappa+1) \, ! \, (-a)^\kappa}} \, 
\big( \ell^{\kappa+1} -(\ell-1)^{\kappa+1} \big) \, g^{(\kappa)}(t^{N+n}) \, ,& 0 \le n \le N \, , \quad \ell=1-r,\dots,0 \, ,\\
\tilde{u}_j^{N+n+1} ={\displaystyle \sum_{\ell=-r}^p} a_\ell \, \tilde{u}_{j+\ell}^{N+n} \, ,& 0 \le n \le N-1 \, , \quad 1 \le j \le J \, ,
\end{cases}
\end{equation*}
and of the (presumably small) solution to the `error' problem:
\begin{equation*}
\begin{cases}
\varepsilon_j^N \, = \, u_j^N -\dfrac{1}{\Delta x} \, {\displaystyle \int_{x_{j-1}}^{x_j}} u(t^N,x) \, {\rm d}x \, ,& 1 \le j \le J \, ,\\
(D_{-}^{k_b} \varepsilon^{N+n})_{J+\ell} \, = \, 0 \, ,& 0 \le n \le N \, , \quad \ell=1,\dots,p \, ,\\
\varepsilon^{N+n}_\ell \, = \, 0 \, ,& 0 \le n \le N \, , \quad \ell=1-r,\dots,0 \, ,\\
\varepsilon_j^{N+n+1} ={\displaystyle \sum_{\ell=-r}^p} a_\ell \, \varepsilon_{j+\ell}^{N+n} \, ,& 0 \le n \le N-1 \, , \quad 1 \le j \le J \, .
\end{cases}
\end{equation*}
Since the initial condition $u(\cdot-a\, t^N)$ and the boundary source term $g(t^N+\cdot)$ satisfy the compatibility conditions at the corner $t=x=0$, we 
can apply the first step of the proof (leading to the error estimate \eqref{erreur-ujn}) for the $(\tilde{u}_j^{N+n})$ part, and we apply the stability estimate 
of \cite[Proposition 4.1]{CL18} for the $(\varepsilon_j^{N+n})$ part. This leads to the second error estimate:
\begin{multline*}
\sup_{N \le n \le 2 \, N} \, \left( \, \sum_{1 \le j \le J} \Delta x \, \left( u_j^n -\dfrac{1}{\Delta x} \, \int_{x_{j-1}}^{x_j} u(t^n,x) \, {\rm d}x \right)^2 \, \right)^{1/2} \\
\le \, C_1 \, (1+C_2) \, T \, \, \Delta x^{\min (k,k_b)} \, \big( \Vert f \Vert_{H^{k+1}((0,L))} +\Vert g \Vert_{H^{k+1}((0,T))} \big) \, ,
\end{multline*}
and, more generally, to:
\begin{multline*}
\sup_{\mu \, N \le n \le (\mu+1) \, N} \, 
\left( \, \sum_{1 \le j \le J} \Delta x \, \left( u_j^n -\dfrac{1}{\Delta x} \, \int_{x_{j-1}}^{x_j} u(t^n,x) \, {\rm d}x \right)^2 \, \right)^{1/2} \\
\le \, C_1 \, \left( \sum_{\nu=0}^\mu C_2^\nu \right) \, T \, \, \Delta x^{\min (k,k_b)} \, \big( \Vert f \Vert_{H^{k+1}((0,L))} +\Vert g \Vert_{H^{k+1}((0,T))} \big) \, .
\end{multline*}
The end of the proof is the same as in \cite{CL18} and we refer the interested reader to that reference for the details.

\section{High order outflow boundary layer analysis}

In the present section, we explain how the analysis of \cite{BC17}, which dealt with the case of the Dirichlet boundary condition at the outflow boundary, can be 
extended to the case of high order extrapolation \eqref{eq:bcNeumann}. The goal is to obtain an accurate description of the numerical solution close to the outflow 
boundary by means of a boundary layer expansion. The leading order term in the expansion corresponds to the exact solution to the transport equation. However, 
this leading order term does not satisfy the extrapolation condition \eqref{eq:bcNeumann}, leading to a consistency error of magnitude $O(\Delta x^{k_b})$ on the 
boundary. Under some mild structural assumption on the numerical scheme \eqref{eq:scheme}, we show below that this $O(\Delta x^{k_b})$ error on the boundary 
gives rise to a boundary layer term which scales as $O(\Delta x^{k_b+1/2})$ in the $\ell^2_j$ norm. This gain of a factor $\Delta x^{1/2}$ enables us to recover the 
optimal convergence rate $k_b$ in the maximum norm on the whole spatial domain for $k_b<k$.

\subsection{An introductive example}

Let us go back for a while to the case of the Lax-Wendroff scheme \eqref{LW}, which we consider here on the left half space:
$$
u_j^{n+1} \, = \, u_j^n - \dfrac{\lambda \, a}{2} \, (u_{j+1}^n-u_{j-1}^n) +\dfrac{(\lambda \, a)^2}{2} \, (u_{j+1}^n-2 \, u_j^n+u_{j-1}^n) \, ,\quad n \in \mathbb{N} 
\, ,\quad j \le J \, .
$$
At the outflow boundary, we impose the first order extrapolation condition (which corresponds to $k_b=1$ while $k=2$ for the Lax-Wendroff scheme):
$$
u_{J+1}^n \, = \, u_J^n \, ,\quad n \in \mathbb{N} \, .
$$
We start with some smooth initial condition $f$ defined on $(-\infty,L)$ which we project as a piecewise constant function:
$$
u_j^0 \, := \, \dfrac{1}{\Delta x} \, \int_{x_{j-1}}^{x_j} f(x) \, {\rm d}x \, ,\quad j \le J \, .
$$
The exact solution to the transport equation on $(-\infty,L)$ with initial condition $f$ is $u(t,x)=f(x-a\, t)$ (recall $a>0$). Hence the consistency analysis of the 
Lax-Wendroff scheme indicates that $u_j^n$ reads:
\begin{equation}
\label{LW-expansion1}
u_j^n \, = \, \dfrac{1}{\Delta x} \, \int_{x_{j-1}}^{x_j} f(x-a \, t^n) \, {\rm d}x +\text{\rm error} \, ,
\end{equation}
where the first term in the expansion on the right hand side yields an $O(\Delta x^2)$ consistency error in the interior domain, but also an $O(\Delta x)$ consistency 
error on the boundary. If we wish to push forward the above expansion, we need to take into account the boundary consistency error and introduce a corrector which 
will hopefully not alter the interior consistency error. This can be achieved by observing that  the sequence:
\[
v_j \, := \, \kappa^j \, ,\quad j \in \mathbb{Z} \, ,\qquad \kappa \, := \, -\dfrac{1+\lambda \, a}{1-\lambda \, a} \, ,
\]
is kept unchanged by the Lax-Wendroff scheme on $\mathbb{Z}$, and belongs to $\ell^2(-\infty,J)$ (we assume $0<\lambda \, a<1$ so $|\kappa|>1$). Hence, to 
remove the boundary consistency error, we can add a corrector on the right hand side of \eqref{LW-expansion1} in the following way:
\begin{equation}
\label{LW-expansion2}
u_j^n \, = \, \dfrac{1}{\Delta x} \, \int_{x_{j-1}}^{x_j} f(x-a \, t^n) \, {\rm d}x +\Delta x \, w^n \, v_{j-J} +\text{\rm error} \, ,
\end{equation}
where $w^n$ is defined in such a way that the two first terms on the right hand side satisfy the first order extrapolation condition, that is:
$$
w^n \, := \, -\dfrac{1}{\Delta x \, (\kappa-1)} \,  \int_{x_{J-1}}^{x_J} \dfrac{f(x+\Delta x-a \, t^n)-f(x-a \, t^n)}{\Delta x} \, {\rm d}x \, ,\quad n \in \mathbb{N} \, .
$$
If $f$ is sufficiently smooth, then $w^n$ is $O(1)$ and the first corrector on the right hand side of \eqref{LW-expansion2} is $O(\Delta x)$ in $\ell^\infty_j$. 
Note however that the $\ell^2$ norm (in space) of this boundary layer corrector scales as $\Delta x^{3/2}$ since the sequence $(\kappa^j)$ is square 
integrable on $(-\infty,0)$. Another important observation at this point is that defining $w^n$ requires the real number $\kappa$ not to equal $1$. This 
fact follows here from a mere verification but it is a general consequence of the analysis in \cite{goldberg} of the Lopatinskii determinant associated with 
the boundary condition \eqref{eq:bcNeumann} (see also the proof of Lemma \ref{lm:bndlayer} below).

At this stage, the error analysis amounts to studying the system satisfied by the sequence:
$$
\left( u_j^n -\dfrac{1}{\Delta x} \, \int_{x_{j-1}}^{x_j} f(x-a \, t^n) \, {\rm d}x -\Delta x \, w^n \, \kappa^{j-J} \right)_{j \le J+1, n \in \mathbb{N}} \, ,
$$
the main point being that there is no boundary forcing term, and since the added boundary layer corrector is $O(\Delta x^{3/2})$ in $\ell^2$, the initial 
condition and interior consistency errors will be $O(\Delta x^{3/2})$. Overall, the stability estimate for the Lax-Wendroff scheme with first order extrapolation 
at the boundary yields the convergence estimate:
$$
\sup_{n \le T/\Delta t} \, \left\| u_j^n -\dfrac{1}{\Delta x} \, \int_{x_{j-1}}^{x_j} f(x-a \, t^n) \, {\rm d}x -\Delta x \, w^n \, v_{j-J} \right\|_{\ell^2(-\infty,J)} \, = \, O(\Delta x^{3/2}) \, .
$$
By the triangle inequality, we thus obtain:
$$
\sup_{n \le T/\Delta t} \, \left\| u_j^n -\dfrac{1}{\Delta x} \, \int_{x_{j-1}}^{x_j} f(x-a \, t^n) \, {\rm d}x \right\|_{\ell^2(-\infty,J)} \, = \, O(\Delta x^{3/2}) \, ,
$$
and this immediately gives the uniform convergence estimate:
$$
\sup_{j \le J,n \le T/\Delta t} \, \left| u_j^n -\dfrac{1}{\Delta x} \, \int_{x_{j-1}}^{x_j} f(x-a \, t^n) \, {\rm d}x \right| \, = \, O(\Delta x) \, .
$$
The above brief sketch is made complete and rigorous below in the general framework of Theorem \ref{mainthm'}.

\subsection{Discrete steady states}

Formalizing somehow the previous example in a more general framework, let us now introduce the following definition.

\begin{definition}[Steady state for the numerical scheme]
A sequence $(v_j)_{j \in \mathbb{Z}}$ is called a (discrete) steady state for the scheme \eqref{eq:scheme} if it is kept unchanged by the time 
iteration process on $\mathbb{Z}$, that is, if it satisfies:
\begin{equation}
\label{eq:characteristic'}
\forall \, j \in \mathbb{Z} \, ,\quad \sum_{\ell=-r}^p a_\ell \, v_{j+\ell} \, = \, v_j \, .
\end{equation}
\end{definition}

In order to characterize the discrete steady states, it is natural to introduce the characteristic polynomial:
\begin{equation}
\label{eq:characteristic}
A(X) \, := \, \sum_{\ell=-r}^p a_\ell \, X^{\ell+r} -X^r \, .
\end{equation}
From the consistency property \eqref{Assu1}, any constant sequence is a discrete steady state for \eqref{eq:scheme}, the same property being available 
for the continuous model (namely, the transport operator). However, the discrete nature of the differentiation operator involved in the numerical scheme 
\eqref{eq:scheme} allows the existence of many other discrete steady states. The latter play an important role when considering the half-space problem 
with some discrete boundary conditions.

From the non-characteristic assumption $a>0$, it follows that among the roots of $A$, $X=1$ is always a simple root. Let us now introduce the whole set 
of (pairwise distinct) roots of $A$ together with their multiplicities through the factorization of $A$ in $\mathbb{C}[X]$:
\begin{equation}
\label{eq:factorizationA}
A(X) \, = \, a_p \, \prod_{\sigma=1}^\tau \, (X-\kappa_\sigma)^{\mu_\sigma} \, .
\end{equation}
Clearly, looking at the degree of the polynomial $A$, one has the equality
\begin{equation*}
\sum_{\sigma=1}^{\tau} \mu_\sigma \, = \, r+p \, .
\end{equation*}
For convenience, we order the roots of $A$ with decreasing modulus:
\begin{equation*}
|\kappa_1| \, \geq \, |\kappa_2| \, \geq \, \cdots \, \geq \, |\kappa_{\tau}| \, .
\end{equation*}
To make the analysis more intelligible, we will work under the following assumption, which was already present in \cite{BC17}.

\begin{assumption}
\label{assumroot}
The characteristic polynomial $A$ defined in \eqref{eq:characteristic} has a unique root (equal to $1$) on the unit circle $\mathbb{S}^1 = \{ z \in \mathbb{C} 
\, , \, |z|=1 \}$. In other words, we assume:
\begin{equation}
\label{eq:assumroot}
\bigcup_{\sigma=1}^\tau \{ \kappa_\sigma \} \cap \mathbb{S} \, = \, \{ 1 \} \, .
\end{equation}
\end{assumption}

As observed on the above example of the Lax-Wendroff scheme, the steady states we are looking at should decrease rapidly as $j$ tends to $-\infty$, so 
that they provide with a localized correction (near the boundary) to the usual convergence analysis and belong to $\ell^2(-\infty,J)$. We are therefore only 
concerned with those roots of $A$ that have modulus larger than 1. Lemma \ref{lm:countproperty} below gives the precise number of such roots (counted 
with their multiplicities). We refer to \cite[Lemma 2.1]{BC17} for the proof.

\begin{lemma}[Unstable roots of $A$ \cite{BC17}]
\label{lm:countproperty}
Under assumptions \ref{ass:consistencystability} and \ref{assumroot}, letting $\kappa_1,\ldots,\kappa_{\tau_+}$ be the roots of $A$ that belong to 
$\mathbb{U} := \{ z \in \mathbb{C} \, , \, |z|>1 \}$ with their corresponding multiplicities $\mu_{1},\ldots,\mu_{\tau_+}$, then one has
\begin{equation}
\label{eq:countproperty}
\sum_{\sigma=1}^{\tau_+} \mu_{\sigma} \, = \, p \, .
\end{equation} 
\end{lemma}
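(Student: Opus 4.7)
My plan is to count the unstable roots by a Rouché-plus-continuity argument applied to the perturbed polynomial
$$A^\varepsilon(X) \, := \, A(X) -\varepsilon \, X^r \, = \, \sum_{\ell=-r}^p a_\ell \, X^{\ell+r} -(1+\varepsilon) \, X^r \, ,$$
for a small parameter $\varepsilon>0$. The idea is that $A^\varepsilon$ has no root on the unit circle (this is the benefit of the perturbation), so Rouché is directly applicable, and then one lets $\varepsilon\to 0^+$ and tracks where the roots land. Since $\deg A = p+r$ (recall $a_{-r} \, a_p \neq 0$, so $X=0$ is not a root of $A$), it is enough to establish that the number of roots of $A$ in the open unit disc $\mathbb{D} := \{|z|<1\}$, counted with multiplicity, equals $r-1$; together with the already-known simple root at $X=1$, this will yield the claim \eqref{eq:countproperty}.

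The first step is the Rouché argument. On the unit circle $\mathbb{S}^1$, the parameterization $X=e^{i\theta}$ gives
$$\Bigl| \sum_{\ell=-r}^p a_\ell \, X^{\ell+r} \Bigr| \, = \, \Bigl| \sum_{\ell=-r}^p a_\ell \, e^{i\ell\theta} \Bigr| \, \le \, 1 \, ,$$
by the $\ell^2$-stability condition \eqref{Assu2}, while $|(1+\varepsilon)X^r| = 1+\varepsilon$. Thus $|A^\varepsilon(X) - (-(1+\varepsilon)X^r)| < |-(1+\varepsilon)X^r|$ on $\mathbb{S}^1$, and Rouché's theorem shows that $A^\varepsilon$ and $-(1+\varepsilon)X^r$ have the same number of zeros in $\mathbb{D}$, namely $r$ (counted with multiplicity).

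The second step is the continuity/bifurcation analysis as $\varepsilon \to 0^+$. Because the nonzero roots of $A$ different from $1$ are, by Assumption \ref{assumroot}, at a positive distance from $\mathbb{S}^1$, for all sufficiently small $\varepsilon>0$ the roots of $A^\varepsilon$ lying close to any such $\kappa_\sigma\neq 1$ remain on the same side of $\mathbb{S}^1$ as $\kappa_\sigma$; in particular, the contribution of these roots to the count in $\mathbb{D}$ is exactly $\sum_{|\kappa_\sigma|<1} \mu_\sigma$. The remaining root of $A^\varepsilon$ inside $\mathbb{D}$ must therefore come from the simple root of $A$ at $X=1$. A direct Taylor expansion confirms this: using \eqref{Assu1} for $m=0,1$, one computes
$$A(1) \, = \, \sum_\ell a_\ell - 1 \, = \, 0 \, , \qquad A'(1) \, = \, \sum_\ell a_\ell \, (\ell+r) - r \, = \, -\lambda\, a \, ,$$
so the root of $A^\varepsilon$ branching off $X=1$ is located near $1 - \varepsilon/(\lambda\, a)$, which lies in $\mathbb{D}$ since $\lambda\, a > 0$.

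Combining the two steps, for $\varepsilon>0$ small enough,
$$r \, = \, \#\{\text{roots of } A^\varepsilon \text{ in } \mathbb{D}\} \, = \, \sum_{|\kappa_\sigma|<1} \mu_\sigma + 1 \, ,$$
hence the number of roots of $A$ in $\mathbb{D}$ equals $r-1$, and the number of roots of $A$ outside $\overline{\mathbb{D}}$ equals $(p+r) - (r-1) - 1 = p$, which is \eqref{eq:countproperty}. The main obstacle in the argument is ensuring that no root of $A$ inside $\mathbb{D}$ escapes to the exterior (or vice versa) as $\varepsilon \to 0^+$, and that exactly one perturbed root accompanies the one at $X=1$ into $\mathbb{D}$ rather than into the exterior; both points are handled by Assumption \ref{assumroot} together with the sign of $A'(1) = -\lambda\, a$.
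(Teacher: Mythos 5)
Your argument is correct: the Rouch\'e comparison of $A^\varepsilon$ with $-(1+\varepsilon)X^r$ on the unit circle (using \eqref{Assu2}), the continuity of the $p+r$ roots as $\varepsilon\to 0^+$ together with Assumption \ref{assumroot}, and the sign of $A'(1)=-\lambda\,a$ (from \eqref{Assu1}) to show that the root branching off $X=1$ enters the unit disk, do yield \eqref{eq:countproperty}. The paper does not reprove this lemma but quotes it from \cite[Lemma 2.1]{BC17}, and your proof is essentially the classical argument used there: for $|z|>1$ the equation $\sum_\ell a_\ell\,\kappa^\ell=z$ has exactly $r$ roots in the open unit disk, and one then lets $z=1+\varepsilon\to 1$ and tracks the limiting positions of the roots.
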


\noindent A direct consequence of Lemma \ref{lm:countproperty} is the following description of steady states for \eqref{eq:scheme} that belong to 
$\ell^2(-\infty,J)$. The proof follows from the standard description of the set of solutions to the recurrence relation \eqref{eq:characteristic'}.

\begin{lemma}
\label{lm:ss}
The set of discrete steady states for the scheme \eqref{eq:scheme} that belong to $\ell^2(-\infty,J)$ is the finite dimensional linear subspace spanned 
by the $p$ linearly independent sequences $\rho^{(\sigma,\nu)}$:
\begin{equation}
\label{eq:generating}
\rho^{(\sigma,\nu)}_j \, := \, (j-J)^\nu \, \kappa_\sigma^{j-J} \, ,\quad j \in \mathbb{Z} \, ,\quad 1 \le \sigma \le \tau_+ \, ,\quad 0 \le \nu < \mu_\sigma \, .
\end{equation}
Equivalently, such discrete steady states in $\ell^2(-\infty,J)$ read:
\begin{equation}
\label{eq:sspolynb}
v_j \, = \, \sum_{\sigma=1}^{\tau_+} p_\sigma(j) \, \kappa_\sigma^{j-J} \, ,\quad j \in \mathbb{Z} \, ,
\end{equation}
where $p_\sigma\in\mathbb{C}_{\mu_\sigma-1}[X]$ for all index $1 \le \sigma \le \tau_+$.
\end{lemma}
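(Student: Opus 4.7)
The plan is to identify the steady state condition \eqref{eq:characteristic'} as a scalar linear recurrence of order $r+p$ with constant coefficients, then apply the classical theory of such recurrences, and finally single out the subspace of $\ell^2(-\infty,J)$ sequences by examining the moduli of the roots of $A$.

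First I would observe that, since $a_{-r} \, a_p \neq 0$ by the normalization in assumption \ref{ass:consistencystability}, the identity $\sum_{\ell=-r}^p a_\ell \, v_{j+\ell} = v_j$ is a genuine recurrence of order $r+p$: given any block of $r+p$ consecutive values, the recurrence determines $(v_j)_{j \in \mathbb{Z}}$ uniquely both forward and backward. Consequently the solution space in $\mathbb{C}^{\mathbb{Z}}$ has dimension exactly $r+p$. The characteristic equation associated to this recurrence is $A(X)=0$, with $A$ as in \eqref{eq:characteristic}, so from the factorization \eqref{eq:factorizationA} and the standard theory the full solution space is spanned by the family $\{\, j \mapsto (j-J)^\nu \, \kappa_\sigma^{j-J} \,\}$ indexed by $\sigma=1,\dots,\tau$ and $0\le \nu < \mu_\sigma$; the shift by $J$ in the base point is purely cosmetic and will be convenient for the normalization near $j=J$.

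Next I would impose the $\ell^2(-\infty,J)$ constraint mode by mode. For a single generating sequence $(j-J)^\nu \, \kappa_\sigma^{j-J}$, summability of the squared modulus for $j\le J$ is equivalent to $|\kappa_\sigma|>1$: if $|\kappa_\sigma|<1$ the sequence grows geometrically as $j\to-\infty$, and if $|\kappa_\sigma|=1$ one is left with a sequence that either is constant or grows polynomially, so is not square-summable. By assumption \ref{assumroot}, the only root of $A$ on $\mathbb{S}^1$ is $\kappa=1$, which contributes only the constant sequence $1$ (as this root is simple, the consistency relation \eqref{Assu1} together with the non-characteristic assumption $a>0$ ensuring $A'(1)\neq 0$); this sequence is not in $\ell^2(-\infty,J)$. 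The admissible modes are therefore exactly those attached to the roots $\kappa_1,\dots,\kappa_{\tau_+} \in \mathbb{U}$, yielding the candidate family $\rho^{(\sigma,\nu)}$ from \eqref{eq:generating}. Its cardinality is $\sum_{\sigma=1}^{\tau_+}\mu_\sigma = p$ by Lemma \ref{lm:countproperty}, matching the claim.

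Two short verifications remain. The first is linear independence of the $\rho^{(\sigma,\nu)}$: this follows from the standard fact that sequences of the form $j^\nu \kappa^j$ for distinct $\kappa$'s and $0\le\nu<\mu_\kappa$ are linearly independent in $\mathbb{C}^{\mathbb{Z}}$, which one proves either via a generalized Vandermonde determinant on any $r+p$ consecutive indices or by successively factoring out the dominant mode from any putative nontrivial combination. The second is that every $\ell^2(-\infty,J)$ steady state lies in the span of the $\rho^{(\sigma,\nu)}$: a general steady state expands in the full basis of the $r+p$-dimensional solution space, and the coefficients attached to roots with $|\kappa_\sigma|\le 1$ (including $\kappa=1$) must vanish, otherwise the combination would fail to be in $\ell^2(-\infty,J)$ by the previous mode-by-mode analysis and the linear independence. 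The equivalent form \eqref{eq:sspolynb} is then just a rewriting, grouping for each $\sigma$ the contributions $\nu=0,\dots,\mu_\sigma-1$ into a polynomial $p_\sigma \in \mathbb{C}_{\mu_\sigma-1}[X]$ evaluated at $j$ (rather than $j-J$), which amounts to a harmless change of basis. No serious obstacle arises; the only delicate point is the sign/location argument for the boundary case $|\kappa_\sigma|=1$, which is entirely handled by assumption \ref{assumroot}.
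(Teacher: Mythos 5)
Your proposal is correct and follows essentially the same route as the paper, which simply invokes the standard description of solutions to the constant-coefficient recurrence \eqref{eq:characteristic'}: the full solution space of dimension $r+p$ is spanned by the modes $(j-J)^\nu\kappa_\sigma^{j-J}$, the $\ell^2(-\infty,J)$ constraint retains exactly the modes with $|\kappa_\sigma|>1$ (assumption \ref{assumroot} disposing of the unit circle), and Lemma \ref{lm:countproperty} gives the count $p$. Your additional detail on linear independence and on why coefficients of modes with $|\kappa_\sigma|\le 1$ must vanish is exactly the content hidden behind the paper's ``standard description'' remark, so there is nothing genuinely different to compare.
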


Let us detail the parametrization of the set of (stable) discrete steady states on the two main examples we are concerned with. For the Lax-Wendroff 
scheme \eqref{LW}, one has:
$$
A(X) \, = \, -\dfrac{\lambda \, a \, (1-\lambda \, a)}{2} \, X^2 +(1-(\lambda \, a)^2) \, X +\dfrac{\lambda \, a \, (\lambda \, a+1)}{2} \, .
$$
The (two simple) roots of $A$ are $1$ and:
$$
\kappa \, := \, -\dfrac{1+\lambda \, a}{1-\lambda \, a} \, ,
$$
with $\kappa \in \mathbb{U}$ assuming, as usual, $0<\lambda \, a<1$. For the half space problem on $(-\infty,J)$, $\kappa$ is therefore the unique stable 
root, and $1$ counts as an unstable root (see \cite{BC17}). In particular, assumption \ref{assumroot} is satisfied. The set of solutions to \eqref{eq:characteristic'} 
that belong to $\ell^2(-\infty,J)$ is the one-dimensional subspace spanned by the sequence $(\kappa^{j-J})_{j \in \mathbb{Z}}$.

Let us now consider the so-called $O3$ scheme, which is a convex combination of the Lax-Wendroff and Beam-Warming schemes, see \cite{strang,despres}. 
We now have $p=1$ and $r=2$, and the scheme reads:
\begin{multline}
\label{eq:O3}
u_j^{n+1} \, = \, - \dfrac{\lambda \, a \, (1-(\lambda \, a)^2)}{6} \, u_{j-2}^n +\dfrac{\lambda \, a \, (1+\lambda \, a) \, (2-\lambda \, a)}{2} \, u_{j-1}^n \\
+\dfrac{(1-(\lambda \, a)^2) \, (2-\lambda \, a)}{2} \, u_j^n -\dfrac{\lambda \, a \, (1-\lambda \, a) \, (2-\lambda \, a)}{6} \, u_{j+1}^n \, ,
\end{multline}
with, again, $0<\lambda \, a<1$. Assumption \ref{ass:consistencystability} is then satisfied (with $k=3$). The roots of the corresponding characteristic polynomial 
$A$ are:
$$
\kappa_\pm \, := \, \dfrac{-(1+\lambda \, a) \, (5-2 \, \lambda \, a) \pm \sqrt{(1+\lambda \, a) \, (33- 15 \, \lambda \, a)}}{2 \, (1-\lambda \, a) \, (2-\lambda \, a)} 
\, ,\quad \kappa_0 \, := \, 1 \, ,
$$
each of them being simple. The root $\kappa_-$ is the only one in $\mathbb{U}$ and $\kappa_+$ belongs to the open unit disk $\mathbb{D}$, which is 
consistent with Lemma \ref{lm:ss} ($p=1$). In particular, assumption \ref{assumroot} is satisfied.

\subsection{The boundary layer expansion. Proof of Theorem \ref{mainthm'}}

We now start proving Theorem \ref{mainthm'}, and for that, we consider some initial condition $f \in H^{k+1}((-\infty,L))$ which, for convenience, we extend 
to the whole real line $\mathbb{R}$ as an element of $H^{k+1}(\mathbb{R})$. Our aim is to compare the solution to the scheme \eqref{neumannsortie} 
(which is set on a half line) with the piecewise constant projection of the exact solution to the transport equation. We thus introduce the notation:
\begin{equation*}
\omega_j^n \, := \, \dfrac{1}{\Delta x} \, \int_{x_{j-1}}^{x_j} f(x-a \, t^n) \, \textrm{d}x \, ,\quad j \le J+p \, ,\quad n \in \mathbb{N} \, .
\end{equation*}
The consistency analysis in \cite{CL18} of the scheme \eqref{neumannsortie} amounts to considering the numerical scheme satisfied by the error 
$(u_j^n-\omega_j^n)$. It is proved in \cite{CL18} that the resulting boundary consistency errors have size $O(\Delta x^{k_b})$, while the interior 
consistency errors have size $O(\Delta x^k)$. Here we have $k_b<k$ so the worst term is on the boundary. Following the arguments in \cite{BC17}, 
we are therefore going to introduce a boundary layer corrector in order to remove the boundary consistency error, up to introducing new initial and 
interior consistency errors, whose size will be proven to be $O(\Delta x^{k_b+1/2})$ hence the final result of Theorem \ref{mainthm'}. Let us make 
this argument precise.

The consistent expansion of the numerical solution $(u_j^n)$ takes the form of a corrected version of $(\omega_j^n)$, involving now a boundary layer term 
$(v_j^n) \in \ell^2(-\infty,J)$ as for the above introductive example. The aim is to reduce the magnitude, at the boundary, of the following error:
\begin{equation}
\label{eq:modconv}
\varepsilon_j^n \, := \, \omega_j^n - u_j^n +\Delta x^{k_b} \, v_j^n \, ,\quad j \le J+p \, ,\quad n \in \mathbb{N} \, .
\end{equation}
The definition of $(v_j^n)_{j \le J+p,n\in\mathbb{N}}$ is chosen so as to correct the consistency error at the boundary. The simplest way to do so consists 
in chosing $(v^n_j)_{j \le J+p,n\in\mathbb{N}}$ so as to get precisely in the ghost cells the relations $(D_-^{k_b} \varepsilon^n)_{J+\ell} = 0$, $\ell=1,\dots,p$. 
From now on, we formulate the problem in such a way to normalize the generating sequences according to the value of $J$. In view of Lemma~\ref{lm:ss}, 
the problem to be solved writes:
\begin{align}
& v_j^n \, = \, \sum_{\sigma=1}^{\tau_+} \, \sum_{\nu=0}^{\mu_\sigma-1} z_{\sigma,\nu}^n \, \rho_j^{(\sigma,\nu)} \, ,\quad 
j \le J+p \, ,\quad n \in \mathbb{N} \, ,\label{eq:ss} \\
& (D_-^{k_b} v^n)_{J+\ell}  \, = \, - \dfrac{1}{\Delta x^{k_b}} \, (D_-^{k_b}\omega^n)_{J+\ell} \, ,\quad \ell=1,\dots,p \, ,\quad n \in \mathbb{N} \, ,\label{eq:bcequation}
\end{align}
where the sequences $\rho^{(\sigma,\nu)}$ are defined in \eqref{eq:generating}. 
Equivalently to \eqref{eq:ss}, we can look for the boundary layer corrector $(v_j^n)_{j\leq J+p,n\in\mathbb{N}}$ under the form
\begin{equation}
\label{eq:sspolynb'}
v_j^n \, = \, \sum_{\sigma=1}^{\tau_+} p_{n,\sigma} (j-J) \, \kappa_\sigma^{j-J} \, ,
\end{equation}
where $p_{n,\sigma} \in \mathbb{C}_{\mu_\sigma-1}[X]$ for all index $1 \le \sigma \le \tau_+$. The existence of the corrector $(v_j^n)$ is given by the following 
result. We recall that in the framework of Theorem \ref{mainthm'}, there holds $k_b<k$.

\begin{lemma}
\label{lm:bndlayer}
Consider the initial condition $f \in H^{k+1}((-\infty,L))$ extended to the whole real line $\mathbb{R}$. Then the boundary layer problem 
\eqref{eq:ss}-\eqref{eq:bcequation} admits a unique solution $(v_j^n)_{j \le J+p,n \in \mathbb{N}}$, and this solution satisfies the estimate:
\begin{equation}
\label{eq:sizebl}
\sup_{n \in \mathbb{N}} \, \left( \, \sum_{j \le J} \Delta x \, (v_j^n)^2 \, \right)^{1/2} \le C \, \Delta x^{1/2} \, \| f \|_{H^{k_b+1}((-\infty,L))} \, ,
\end{equation}
where the constant $C>0$ is independent of $\Delta x>0$, $J$, $L$ and $f$.
\end{lemma}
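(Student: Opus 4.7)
My plan is to establish Lemma~\ref{lm:bndlayer} in three stages: invertibility of the $p\times p$ linear system that determines the coefficients $(z^n_{\sigma,\nu})$ via \eqref{eq:ss}--\eqref{eq:bcequation}, a uniform pointwise bound on its right-hand side, and a uniform $\ell^2$ estimate on the generating sequences $\rho^{(\sigma,\nu)}$ on $(-\infty,J]$.

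For the first stage, by Lemma~\ref{lm:countproperty} the ansatz \eqref{eq:ss} has $\sum_\sigma\mu_\sigma=p$ unknowns, while \eqref{eq:bcequation} provides $p$ equations, so the system is square with a coefficient matrix $M$ whose entries $M_{\ell,(\sigma,\nu)}=(D_-^{k_b}\rho^{(\sigma,\nu)})_{J+\ell}$ for $\ell=1,\ldots,p$ are independent of $n$, $J$ and $\Delta x$. Starting from the elementary identity $D_-^{k_b}[\kappa^j]=(\kappa-1)^{k_b}\kappa^{j-k_b}$ and recognizing $(j-J)^\nu \kappa^{j-J}$ as $(\kappa\partial_\kappa)^\nu \kappa^{j-J}$, I would commute $D_-^{k_b}$ (acting on $j$) with $(\kappa\partial_\kappa)^\nu$ (acting on $\kappa$) to express the entries of $M$ as the values at $\kappa=\kappa_\sigma$ of $\nu$-th derivatives of $(\kappa-1)^{k_b}\kappa^{\ell-k_b}$. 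This exhibits $M$ as a confluent Vandermonde matrix, whose determinant factors as a nonzero product involving the $(\kappa_\sigma-1)^{k_b\mu_\sigma}$ and the differences $\kappa_\sigma-\kappa_{\sigma'}$. Under Assumption~\ref{assumroot}, $\kappa_\sigma\neq 1$ for every $\sigma$ and the $\kappa_\sigma$ are pairwise distinct, so $\det M\neq 0$ and $\max_{\sigma,\nu}|z^n_{\sigma,\nu}|\le C\max_\ell|\Delta x^{-k_b}(D_-^{k_b}\omega^n)_{J+\ell}|$ with $C$ depending only on $M$.

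For the second stage, introducing the antiderivative $G(x):=\int_0^x f(y-at^n)\,\mathrm{d}y$ gives $\omega_j^n=\Delta x^{-1}(G(x_j)-G(x_{j-1}))$ and hence
\[
\Delta x^{-k_b}(D_-^{k_b}\omega^n)_{J+\ell} \, = \, \Delta x^{-(k_b+1)}(D_-^{k_b+1}G_\bullet)_{J+\ell} \, ,
\]
where $G_\bullet$ is the restriction of $G$ to the grid. Since the operator $D_-^{k_b+1}$ annihilates polynomials of degree at most $k_b$, the Peano kernel theorem (equivalently, Taylor's integral remainder) yields
\[
\Delta x^{-(k_b+1)}(D_-^{k_b+1}G_\bullet)_{J+\ell} \, = \, \int \Phi_\ell(\xi)\, f^{(k_b)}\big(x_{J+\ell-k_b-1}+\xi\,\Delta x-at^n\big)\,\mathrm{d}\xi \, ,
\]
for a bounded compactly supported kernel $\Phi_\ell$ depending only on $k_b$ and $\ell$. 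Since the space dimension is one, the Sobolev embedding $H^{k_b+1}(\mathbb{R})\hookrightarrow W^{k_b,\infty}(\mathbb{R})$ combined with a bounded Sobolev extension of $f$ from $(-\infty,L)$ to $\mathbb{R}$ gives $|\Delta x^{-k_b}(D_-^{k_b}\omega^n)_{J+\ell}|\le C\|f\|_{H^{k_b+1}((-\infty,L))}$, uniformly in $n$, $\ell$, $J$ and $\Delta x$.

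Combining the two stages, $\max_{\sigma,\nu}|z^n_{\sigma,\nu}|\le C\|f\|_{H^{k_b+1}((-\infty,L))}$ uniformly. Since $|\kappa_\sigma|>1$, a direct geometric-series computation gives
\[
\sum_{j\le J}\big|\rho^{(\sigma,\nu)}_j\big|^2 \, = \, \sum_{m\ge 0} m^{2\nu}\,|\kappa_\sigma|^{-2m} \, \le \, C_\sigma \, ,
\]
a constant independent of $J$ and $\Delta x$. The triangle inequality then delivers $\|v^n\|_{\ell^2(-\infty,J)}\le C\|f\|_{H^{k_b+1}((-\infty,L))}$, and multiplication by $\sqrt{\Delta x}$ produces exactly \eqref{eq:sizebl}. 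I expect the main technical obstacle to be the invertibility in the first stage when some $\mu_\sigma>1$, which requires carefully unwinding the confluent Vandermonde structure; in the two examples of interest (Lax--Wendroff and $O3$) all unstable roots are simple, and $M$ then reduces to a plain Vandermonde multiplied by the nonzero diagonal factor $\bigl((\kappa_\sigma-1)^{k_b}\kappa_\sigma^{-k_b}\bigr)_\sigma$.
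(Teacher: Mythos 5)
Your proposal is correct and follows the same overall skeleton as the paper (square $p\times p$ system for the coefficients, invertibility, uniform bound on $\Delta x^{-k_b}(D_-^{k_b}\omega^n)_{J+\ell}$, and the $O(\sqrt{\Delta x})$ weighted $\ell^2$ bound on the decaying modes), but the two key ingredients are obtained by genuinely different means. For invertibility, the paper deliberately avoids the determinant computation you propose: it works with divided differences and the Leibniz formula, changes unknowns from $p_\sigma$ to $Q_\sigma$, and reduces the kernel equations to a generalized Lagrange--Hermite interpolation problem, concluding $Q_\sigma=0$ and then $p_\sigma=0$ by a leading-coefficient argument using $(1-\kappa_\sigma^{-1})^{k_b}\neq 0$. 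Your route instead uses $D_-^{k_b}\kappa^j=(\kappa-1)^{k_b}\kappa^{j-k_b}$ and the identification $(j-J)^\nu\kappa^{j-J}=(\kappa\partial_\kappa)^\nu\kappa^{j-J}$ to exhibit the matrix, block by block, as a confluent Vandermonde matrix multiplied on each $\sigma$-block by triangular matrices with nonzero diagonal entries (powers of $\kappa_\sigma$ from the $\kappa\partial_\kappa$ versus $\partial_\kappa$ change, and $(\kappa_\sigma-1)^{k_b}\kappa_\sigma^{1-k_b}$ from the Leibniz rule applied to the prefactor $(\kappa-1)^{k_b}\kappa^{1-k_b}$); this closes the confluent case $\mu_\sigma>1$ that you flagged, so there is no genuine gap, only a step to write out. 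Note also that $\kappa_\sigma\neq 1$ follows simply from $|\kappa_\sigma|>1$; Assumption \ref{assumroot} enters through Lemma \ref{lm:countproperty} (the count $p$ and the strict instability of the modes), not as the direct source of $\kappa_\sigma\neq1$. For the data bound, you reprove via a Peano-kernel/B-spline representation of $\Delta x^{-(k_b+1)}D_-^{k_b+1}$ applied to the antiderivative, combined with the one-dimensional embedding $H^{k_b+1}\hookrightarrow W^{k_b,\infty}$ and a bounded extension, whereas the paper simply cites \cite[Lemma 3.6]{CL18} together with the continuity of the reflection operator. What each approach buys: yours is self-contained and yields an explicit determinant (hence potentially quantitative control of the coefficients $\beta_{\sigma,\nu,\ell}$), at the cost of the confluent bookkeeping; the paper's divided-difference argument avoids determinants altogether and handles multiplicities more transparently, at the cost of being less explicit.
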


\begin{proof} Let us fix some integer $n\in\mathbb{N}$. The solution $(u_j^n)_{j \le J+p}$ to \eqref{neumannsortie} solves the homogeneous boundary condition 
\eqref{eq:bcNeumann}, thus equivalently to \eqref{eq:bcequation} one has to find the vector of coordinates $z \in \mathbb{C}^p$ solution to the linear system 
$A_{k_b} \, z+b=0$ where $b=\Delta x^{-k_b} \, ((D_-^{k_b} \omega^n)_{J+\ell})_{1 \le \ell \le p}$, and the $p \times p$ matrix $A_{k_b}$ is defined as follows:
\begin{equation*}
A_{k_b} \, := \, \begin{pmatrix}
(D_-^{k_b} \rho^{(1)})_{1} & \ldots & (D_-^{k_b} \rho^{(p)})_{1} \\
\vdots & & \vdots \\
(D_-^{k_b} \rho^{(1)})_{p} & \ldots & (D_-^{k_b} \rho^{(p)})_{p}
\end{pmatrix} \, ,
\end{equation*}
where we have relabeled the sequences $\rho^{(\sigma,\nu)}$, $\sigma=1,\dots,\tau_+$, $\nu=0,\dots,\mu_\sigma-1$ as $\rho^{(1)},\dots,\rho^{(p)}$ in order to 
make the definition of $A_{k_b}$ easier to read. The latter matrix is somehow the $k_b$th-order discrete derivative of the so-called confluent Vandermonde matrix. 
It seems possible to compute the determinant of $A_0$, see~\cite{HornJohnson94}, and then to extend this result to higher values of $k_b$ but we prefer to avoid 
such complicated computations. From the identity of dimensions, we shall just prove that the matrix $A_{k_b}$ is one-to-one, in other words we shall prove that the 
problem \eqref{eq:ss}-\eqref{eq:bcequation}, or equivalently \eqref{eq:sspolynb'}-\eqref{eq:bcequation}, admits a trivial kernel.

Dealing with discrete derivatives of products of polynomial and/or geometric sequences, the divided difference algebra appears as a suitable tool in our analysis. 
For more details we refer the interested reader to \cite{Steffensen39,Popoviciu40,deBoor05}. For consistency in the notation, we recall hereafter the recursive 
definition of divided differences, but specified for the case of consecutive integer abscissae. Being given a sequence of complex numbers $(w_j)_{j\in\mathbb{Z}}$, 
one defines:
\begin{equation}
\label{eq:recdiffdiv}
\begin{aligned}
& w[j] \, := \, w_j \, ,\quad j \in \mathbb{Z} \, ,\\
& w[j-k,\ldots,j] \, := \, \dfrac{1}{m} \, \Bigl( w[j-m+1,\ldots,j] - w[j-m,\ldots,j-1]\Bigr) \, ,\quad j \in \mathbb{Z} \, , \, m \in \mathbb{N}^\star \, .
\end{aligned}
\end{equation}
The quantity $(D_-^{k_b}w)_j$ is directly related to the divided difference $w[j-k_b,\ldots,j]$ by the equality:
\begin{equation}
\label{eq:diffdiv}
(D_-^{k_b} w)_j \, = \, k_b! \, w[j-k_b,\ldots,j] \, ,\quad j \in \mathbb{Z} \, .
\end{equation}
Importantly, we may also use the Leibniz formula for divided differences of a product of two sequences:
\begin{equation}
\label{eq:leibniz}
(w \, \tilde{w})[j-k_b,\ldots,j] \, = \, \sum_{m=0}^{k_b} w[j-k_b,\ldots ,j-m] \, \tilde{w}[j-m,\ldots,j] \, ,\quad j \in \mathbb{Z} \, .
\end{equation}
In terms of the $D_-$ operator, using the relation \eqref{eq:diffdiv}, the Leibniz formula \eqref{eq:leibniz} rewrites under the more recognizable form:
\begin{equation*}
(D_-^{k_b}(w \, \tilde{w}))_j \, = \, \sum_{m=0}^{k_b} {k_b \choose m} \, (D_-^{k_b-m} w)_{j-m} \, (D_-^m \tilde{w})_j \, ,\quad j \in \mathbb{Z} \, .
\end{equation*}

Let us continue with the representation formula \eqref{eq:sspolynb} of the solution to the boundary layer problem. Looking at the kernel of the linear problem 
\eqref{eq:bcequation}, we have to find polynomials $(p_\sigma)_{1 \le \sigma \le \tau_+}$ with respective degrees less than or equal to $(\mu_\sigma-1)_{1 \le 
\sigma \le \tau_+}$, satisfying the set of equations:
\[
\sum_{\sigma=1}^{\tau_+} \, \sum_{m=0}^{k_b} \, p_\sigma[\ell-k_b,\ldots, \ell-m] \, \kappa_\sigma[\ell-m,\ldots, \ell] \, = \, 0 \, ,\quad 1 \le \ell \le p \, ,
\]
where we denote, with a slight abuse in the notation, $\kappa_\sigma$ for the corresponding geometric sequence $(\kappa_m)_{m \in \mathbb{Z}}$, for any 
$\sigma=1,\dots,\tau_+$. Actually, from the identity \eqref{eq:diffdiv} and by induction on the integer $m$ (or using \eqref{eq:recdiffdiv}), it is easy to prove 
that the $m$-th order divided difference of $\kappa_\sigma$ is given by:
\[
\kappa_\sigma [\ell-m,\ldots ,\ell] \, = \, \dfrac{1}{m!} \, (D_-^m \kappa_\sigma)_\ell \, = \, \dfrac{1}{m!} \, (1-\kappa_\sigma^{-1})^m \, \kappa_\sigma^\ell \, ,
\quad 1 \le \ell \le p \, .
\]
Let us introduce, for any integer $\sigma$ and any polynomial $p_\sigma$ with degree less than or equal to $\mu_\sigma-1$, the following polynomial 
$Q_\sigma$ also with degree less than or equal to $\mu_\sigma-1$:
\begin{equation}
\label{eq:changepolyn}
Q_\sigma(X) \, := \, \sum_{k=0}^{k_b} \, (1-\kappa_\sigma^{-1})^k \, p_\sigma[X-k_b,\ldots ,X-k] \, .
\end{equation}
With these notations, the equations to solve now equivalently read:
\begin{equation}
\label{eq:genVDM}
\sum_{\sigma=1}^{\tau_+} Q_\sigma(\ell) \, \kappa_\sigma^\ell \, = \, 0,\quad 1 \le \ell \le p \, .
\end{equation}
Actually, the above set of equations \eqref{eq:genVDM} exactly corresponds to the generalized Lagrange-Hermite interpolation problem, which is known 
to be invertible. Thus one necessarily has $Q_\sigma=0$ for any $\sigma=1,\dots,\tau_+$. It then only remains to deduce that any of the polynomials 
$p_{\sigma}$ is also zero.

Observe that for any integer $k$ with $0 \le k \le k_b$, from the divided difference algebra, the polynomial $p_\sigma[X-k_b,\ldots ,X-k]$ has degree less 
than $\mu_\sigma-(k_b-k)$, see \eqref{eq:diffdiv}. Thus the highest degree polynomial involved in the sum \eqref{eq:changepolyn} is $p_\sigma[X-k_b]$ 
(for $k=k_b$). Since we know that $Q_\sigma$ is zero, then $p_\sigma$ is also necessarily zero (consider the highest degree coefficient). The injectivity 
of the boundary layer problem \eqref{eq:sspolynb'}-\eqref{eq:bcequation} is proved, and the matrix $A_{k_b}$ is therefore invertible.

As a consequence, there exist some uniquely determined coefficients $(\beta_{\sigma,\nu,\ell})$ that depend only on the considered scheme and on the 
extrapolation order $k_b$ (but neither on the initial condition $f$ nor on the time index $n$), such that the solution to \eqref{eq:ss}-\eqref{eq:bcequation} 
has the form:
\begin{equation}
\label{eq:solvez}
v_j^n \, = \, \Delta x^{-k_b} \, \sum_{\sigma=1}^{\tau_+} \, \sum_{\nu=0}^{\mu_\sigma-1} \, \sum_{\ell=1}^p \beta_{\sigma,\nu,\ell} \, 
(D_-^{k_b} \omega^n)_{J+\ell} \, \rho_j^{(\sigma,\nu)} \, .
\end{equation}
Using now triangular inequalities, we obtain, for some constant $C>0$, the upper bound:
\[
(v_j^n)^2 \, \le \, C \, \Delta x^{-2 \, k_b} \, \sum_{\ell=1}^p \big( (D_-^{k_b}\omega^n)_{J+\ell} \big)^2 \, 
\sum_{\sigma=1}^{\tau_+} \, \sum_{\nu=0}^{\mu_\sigma-1} \, (\rho_j^{(\sigma,\nu)})^2 \, ,\quad j \le J \, .
\]
On the one side, we recall the definition \eqref{eq:generating} of the sequences $\rho^{(\sigma,\nu)}$ in Lemma \ref{lm:ss}, hence the estimate:
\begin{equation}
\label{eq:normrho}
\left( \sum_{j \le J} \Delta x \, \sum_{\sigma=1}^{\tau_+} \, \sum_{\nu=0}^{\mu_\sigma-1} \, (\rho_j^{(\sigma,\nu)})^2 \right)^{1/2} \, \le \, C \, \sqrt{\Delta x} \, ,
\end{equation}
where the constant $C>0$ is independent of $J$ and $\Delta x$. On the other side, from \cite[Lemma 3.6]{CL18} and the continuity of the reflection operator 
from $H^{k_b+1}((-\infty,L))$ to $H^{k_b+1}(\mathbb{R})$, we have the upper bound:
\[
\big| (D_-^{k_b} \omega^n)_{J+\ell} \big| \, \le \, C \, \Delta x^{k_b} \, \| f \|_{H^{k_b+1}((-\infty,L))} \, ,\quad \ell=1,\dots,p \, ,\quad n \in \mathbb{N} \, ,
\]
and thus the required estimate~\eqref{eq:sizebl} follows.
\end{proof}

The interested reader will find in \cite{goldberg} a similar argument to the one developed in the proof of Lemma \ref{lm:bndlayer}. In \cite{goldberg}, the 
analysis of the determinant of the matrix $A_{k_b}$ arises from the verification of the so-called Uniform Kreiss-Lopatinskii Condition (a condition whose 
significance is based on the work \cite{gks}). Let us now prove Theorem~\ref{mainthm'}. The error $(\varepsilon_j^n)_{j \le J+p,n \in \mathbb{N}}$ introduced 
in \eqref{eq:modconv}, and fully defined through Lemma~\ref{lm:bndlayer}, satisfies the following set of equations\footnote{Here we use $u_j^0=\omega_j^0$ 
for $j \le J$.}:
\begin{equation}
\label{eq:schemehomogeneousBC}
\left\{
\begin{aligned}
& \varepsilon_j^0 \, = \, \Delta x^{k_b} \, v_j^0 \, , & & j \le J \, ,\\
& (D_-^{k_b} \varepsilon^n)_{J+\ell} \, = \, 0 \, , & & 0 \le n \le T/\Delta t \, ,\quad \ell=1,\ldots,p \, ,\\
& \varepsilon_j^{n+1} \, = \, \sum_{\ell=-r}^p a_\ell \, \varepsilon_{j+\ell}^n +\Delta t \, F_j^{n+1} \, , & & 0 \le n \le T/\Delta t-1 \, ,\quad  j \le J \, .
\end{aligned}
\right.
\end{equation}
Here above, the consistency error $F_j^{n+1}$ consists of two terms: a first one coming from the usual interior consistency error denoted $e_j^{n+1}$, and a 
second one coming from the time evolution of the boundary layer corrector denoted $\delta_j^{n+1}$. In other words, we split $F_j^{n+1} = e_j^{n+1}+\delta_j^{n+1}$ 
with:
\begin{equation*}
e_j^{n+1} \, := \, \dfrac{1}{\Delta t} \, \left( \omega_j^{n+1} -\sum_{\ell=-r}^p a_\ell \, \omega_{j+\ell}^n \right) \, ,\quad \textup{and} \quad 
\delta_j^{n+1} \, := \, \dfrac{\Delta x^{k_b}}{\Delta t} \, \left( v_j^{n+1} -\sum_{\ell=-r}^p a_\ell \, v_{j+\ell}^n \right) \, .
\end{equation*}
Considering the scheme \eqref{eq:schemehomogeneousBC}, the error $(\varepsilon_j^n)_{j \le J+p,0 \le n \le T/\Delta t}$ obeys the stability estimate 
applicable in the case of the homogeneous extrapolation boundary condition, see \cite[Proposition~3.4]{CL18}:
\begin{equation}
\label{eq:stabilityhomogeneous}
\sup_{0 \le n \le T/\Delta t} \sum_{j \le J} \Delta x \, (\varepsilon_j^n)^2 \, \le \, C \, \left\{ \sum_{j \le J} \Delta x \, (\varepsilon_j^0)^2 
+T^2 \, \sup_{1 \le n \le T/\Delta t} \, \sum_{j \le J} \Delta x \, (F_j^n)^2 \right\} \, .
\end{equation}
It therefore remains to estimate the initial and interior consistency errors in \eqref{eq:schemehomogeneousBC}:
\begin{itemize}
   \item \underline{The initial consistency error}. Estimating the initial condition $(\varepsilon_j^0)_{j \le J}$ directly follows from the estimate \eqref{eq:sizebl} 
   in Lemma \ref{lm:bndlayer}:
$$
\sum_{j \le J} \Delta x \, (\varepsilon_j^0)^2 \, \le \, C \, \Delta x ^{2 \, k_b+1} \, \| f \|^2_{H^{k_b+1}((-\infty,L))} \, .
$$
   \item \underline{The interior consistency error. I}. Estimating the interior consistency error $(e_j^n)$ related to the projected exact solution $(\omega_j^n)$ 
   has already been achieved in \cite{CL18} so we just report the result:
$$
\sup_{1 \le n \le T/\Delta t} \, \sum_{j \le J} \Delta x \, (e_j^n)^2 \, \le \, C \, \Delta x^{2\, k} \, \| f \|^2_{H^{k+1}((-\infty,L))} \, .
$$
   \item \underline{The interior consistency error. II}. Estimation of the new error term related to $(\delta_j^n)$. Observe that, first due to the steady states 
   decomposition from Lemma \ref{lm:ss}, and then using successively \eqref{eq:ss} and \eqref{eq:solvez}, the interior consistency error arising from the 
   boundary layer corrector rewrites as:
\begin{equation*}
\delta_j^{n+1} \, = \, \dfrac{\Delta x^{k_b}}{\Delta t} \, (v_j^{n+1}-v_j^{n}) \, = \, \dfrac{1}{\Delta t} \, 
\sum_{\sigma=1}^{\tau_+} \, \sum_{\nu=0}^{\mu_\sigma-1} \, \sum_{\ell=1}^p \beta_{\sigma,\nu,\ell} \, (D_-^{k_b}(\omega^{n+1}-\omega^n))_{J+\ell} \, 
\rho_j^{(\sigma,\nu)} \, .
\end{equation*}
Thus, from Cauchy-Schwarz inequalities, there exists a constant $C$ such that
\begin{equation*}
\sum_{j \le J} \Delta x \, (\delta_j^{n+1})^2 \, \le \, C \, \Delta x \, \sup_{\ell=1,\dots,p} \left( D_-^{k_b}\left(\dfrac{\omega^{n+1}-\omega^n}{\Delta t} \right)_{J+\ell} 
\right)^2 \, .
\end{equation*}
In the above formula, the discrete in time derivative of $\omega_j^n$ rewrites, for any $j \le J+p$ as
\begin{align*}
\dfrac{\omega^{n+1}_j-\omega^n_j}{\Delta t} \, =& \, \dfrac{1}{\Delta x} \, \int_{x_{j-1}}^{x_j} \dfrac{f(x-at^n-a\Delta t)-f(x-at^n)}{\Delta t} \, \textrm{d}x \\
=& \, \dfrac{1}{\Delta x}\int_{x_{j-1}}^{x_j} \underbrace{\dfrac{1}{\Delta t} \, \int_{x-at^n}^{x-at^n-a\Delta t} f'(y)\,\textrm{d}y}_{=:F(x)} \, \textrm{d}x \, .
\end{align*}
Since $f \in H^{k+1}(\mathbb{R})$ with $k>k_b$, we have at least $f \in H^{k_b+2}(\mathbb{R})$ and therefore $F \in H^{k_b+1}(\mathbb{R})$ with
\[
\| F^{(k_b+1)} \|_{L^2(\mathbb{R})} \, \le \, a^2 \, \| f^{(k_b+2)} \|_{L^2(\mathbb{R})} \, ,
\]
from which we deduce, using again \cite[Lemma 3.6]{CL18}:
\[
\left| D_-^{k_b} \left( \dfrac{\omega^{n+1}-\omega^n}{\Delta t} \right)_{J+\ell} \right| \, \le \, C \, \Delta x^{k_b} \, \| f \|_{H^{k_b+2}(\mathbb{R})} \, ,\quad 
\ell=1,\dots,p \, .
\]
Thus, using again the upper bound~\eqref{eq:normrho}, the above estimate and the $H^{k_b+2}$-continuity of the extension operator, we end up with:
$$
\sup_{1 \le n \le T/\Delta t} \, \sum_{j \le J} \Delta x \, (\delta_j^n)^2 \, \le \, C \, \Delta x^{2 \, k_b+1} \, \| f \|^2_{H^{k+1}((-\infty,L))} \, .
$$
\end{itemize}

Let us now come back to the stability estimate~\eqref{eq:stabilityhomogeneous} and use the three above consistency estimates to get (recall 
$T \ge 1$ and $k_b<k$):
$$
\sup_{0 \le n \le T/\Delta t} \, \left( \sum_{j \le J} \Delta x \, (\varepsilon_j^n)^2 \right)^{1/2} \, \le C \, T \, \Delta x ^{k_b+1/2} \, \| f \|_{H^{k+1}((-\infty,L))} \, .
$$
From the constructive formula for the boundary layer corrector $(v_j^n)$, we have derived the bound \eqref{eq:sizebl} which, by the triangle 
inequality, yields the convergence estimate (recall $\varepsilon_j^n=\omega_j^n-u_j^n+\Delta x ^{k_b} \, v_j^n$):
$$
\sup_{0 \le n \le T/\Delta t} \, \left( \sum_{j \le J} \Delta x \, (u_j^n -\omega_j^n)^2 \right)^{1/2} \, \le C \, T \, \Delta x ^{k_b+1/2} \, \| f \|_{H^{k+1}} \, .
$$
Using now the (crude) estimate:
$$
\sup_{j \le J} \, |b_j| \, \le \, \Delta x^{-1/2} \, \left( \sum_{j \le J} \Delta x \, b_j^2 \right)^{1/2} \, ,
$$
we complete the proof of Theorem \ref{mainthm'}.

\section{Numerical experiments}

\subsection{The Lax-Wendroff scheme}

We report in this paragraph on various numerical experiments with the Lax-Wendroff scheme \eqref{LW} (which corresponds to $p=r=1$). Assumption 
\ref{ass:consistencystability} is satisfied provided that $\lambda \, a \le 1$, and the order $k$ equals $2$. In all what follows, we choose $a=1$ and 
$\lambda=5/6$. The interval length is $L=6$ and the final time $T$ equals $8$. The initial condition is $f(x)=\sin x$ and the boundary source term is 
$g(t)=-\sin t$ so that the exact solution to \eqref{TBE} is $u(t,x)=\sin (x-t)$. With the values of $J$ reported in Table \ref{Table1} below, we implement 
the Lax-Wendroff scheme \eqref{LW} with the following numerical boundary conditions:
\begin{align*}
&u_{J+1}^n \, = \, u_J^n \, , \quad \text{\rm (first order outflow extrapolation condition),} \\
&u_0^n \, = \, \begin{cases}
-\sin t^n \, ,& \text{\rm (Dirichlet inflow condition \eqref{LW-inflow}),} \\
-\sin t^n -(\Delta x/2) \, \cos t^n \, ,& \text{\rm (inverse Lax-Wendroff inflow condition \eqref{eq:bcDirichlet}).} 
\end{cases}
\end{align*}
The errors, as measured in the statement of Theorem \ref{mainthm}, are reported in Table \ref{Table1} below for each of the two cases (either the 
Dirichlet inflow condition \eqref{LW-inflow} or the inverse Lax-Wendroff inflow condition \eqref{eq:bcDirichlet}). In either case, the observed convergence 
rate is $1$ since increasing $J$ by a factor $2$ decreases the error of the same factor $2$. This behavior is fully justified by Theorem \ref{mainthm'} 
since we have $k_b<k$ here.

\begin{table}[!htp]
\centering
\begin{tabular}{|c|c|c|}
\hline 
Number of cells $J$ & Dirichlet inflow condition & Inverse Lax-Wendroff inflow condition \\
\hline 
1000 & $4.1 \cdot 10^{-3}$ & $5.1 \cdot 10^{-4}$ \\
\hline 
2000 & $2.1 \cdot 10^{-3}$ & $2.5 \cdot 10^{-4}$ \\
\hline 
4000 & $1.1 \cdot 10^{-3}$ & $1.3 \cdot 10^{-4}$ \\
\hline 
8000 & $5.3 \cdot 10^{-4}$ &$6.3 \cdot 10^{-5}$ \\
\hline 
\end{tabular}
\caption{The $\ell_{n,j}^\infty$ error for the Lax-Wendroff scheme \eqref{LW} with first order outflow extrapolation and either the Dirichlet, or inverse 
Lax-Wendroff, inflow condition.}
\label{Table1}
\end{table}

We now turn to the second order outflow extrapolation condition:
\begin{align*}
&u_{J+1}^n \, = \, 2 \, u_J^n-u_{J-1}^n \, , \quad \text{\rm (second order outflow extrapolation condition \eqref{LW-outflow}),} \\
&u_0^n \, = \, \begin{cases}
-\sin t^n \, ,& \text{\rm (Dirichlet inflow condition \eqref{LW-inflow}),} \\
-\sin t^n -(\Delta x/2) \, \cos t^n \, ,& \text{\rm (inverse Lax-Wendroff inflow condition \eqref{eq:bcDirichlet}).} 
\end{cases}
\end{align*}
The errors, as measured in the statement of Theorem \ref{mainthm}, are reported in Table \ref{Table2} below for each of the two cases (either the 
Dirichlet inflow condition \eqref{LW-inflow} or the inverse Lax-Wendroff inflow condition \eqref{eq:bcDirichlet}). For the Dirichlet inflow condition, the 
observed convergence rate is $1$ again (despite the more accurate outflow treatment), but one recovers the convergence rate $2$ with the inverse 
Lax-Wendroff inflow condition \eqref{eq:bcDirichlet}. However, proving rigorously that this numerical scheme converges with the rate $2$ in the 
maximum norm might be very difficult (it might actually even be wrong !), even for smooth data, since the Lax-Wendroff scheme is known to be 
unstable in $\ell^\infty (\mathbb{Z})$. Improving the convergence rate $3/2$ of Theorem \ref{mainthm} in the case of the Lax-Wendroff scheme with 
second order extrapolation outflow condition is left to a future work.

\begin{table}[!htp]
\centering
\begin{tabular}{|c|c|c|}
\hline 
Number of cells $J$ & Dirichlet inflow condition & Inverse Lax-Wendroff inflow condition \\
\hline 
1000 & $3.7 \cdot 10^{-3}$ & $1.2 \cdot 10^{-5}$ \\
\hline 
2000 & $1.8 \cdot 10^{-3}$ & $2.9 \cdot 10^{-6}$ \\
\hline 
4000 & $9.3 \cdot 10^{-4}$ & $7.3 \cdot 10^{-7}$ \\
\hline 
8000 & $4.7 \cdot 10^{-4}$ & $1.8 \cdot 10^{-7}$ \\
\hline 
\end{tabular}
\caption{The $\ell_{n,j}^\infty$ error for the Lax-Wendroff scheme \eqref{LW} with second order outflow extrapolation \eqref{LW-outflow} 
and either the Dirichlet or inverse Lax-Wendroff inflow condition.}
\label{Table2}
\end{table}

\subsection{The $O3$ scheme}

Let us now consider the $O3$ scheme \eqref{eq:O3}, which is implemented by considering the recurrence:
$$
u_j^{n+1} \, = \, a_{-2} \, u_{j-2}^n +a_{-1} \, u_{j-1}^n +a_0 \, u_j^n +a_1 \, u_{j+1}^n \, , \quad n \in \mathbb{N}\, ,\quad j=1,\dots,J \, ,
$$
with:
\begin{align*}
&a_{-2} \, := \, -\dfrac{\lambda \, a}{6} \, \big( 1-(\lambda \, a)^2 \big) \, ,\quad a_{-1} \, := \, \dfrac{\lambda \, a}{2} \, (1+\lambda \, a) \, (2-\lambda \, a) \, ,\\
&a_0 \, := \, \dfrac{1}{2} \, \big( 1-(\lambda \, a)^2 \big) \, (2-\lambda \, a) \, ,\quad a_1 \, := \, -\dfrac{\lambda \, a}{6} \, (1-\lambda \, a) \, (2-\lambda \, a) \, .
\end{align*}
The reader can verify that assumption \ref{ass:consistencystability} is satisfied provided that $\lambda \, a \le 1$, and the order $k$ equals $3$ 
($r=2$ and $p=1$ here). To maintain third order accuracy, we implement the latter scheme with the following boundary conditions:
\begin{align*}
&u_{J+1}^n \, = \, 3 \, u_J^n -3 \, u_{J-1}^n +u_{J-2}^n \, , \quad \text{\rm (third order outflow extrapolation condition, $k_b=3$),} \\
&u_0^n \, = \, -\sin t^n -(\Delta x/2) \, \cos t^n +(\Delta x^2/6) \, \sin t^n\, , \quad \text{\rm (inverse Lax-Wendroff inflow condition \eqref{eq:bcDirichlet}),} \\
&u_{-1}^n \, = \, -\sin t^n -(3 \, \Delta x/2) \, \cos t^n +(7 \, \Delta x^2/6) \, \sin t^n\, , 
\quad \text{\rm (inverse Lax-Wendroff inflow condition \eqref{eq:bcDirichlet}).} 
\end{align*}
The measured errors are reported in Table \ref{Table3} below. They correspond to a rate of convergence $3$. Let us observe that the $O3$ scheme 
is known to be stable in $\ell^\infty(\mathbb{Z})$, see \cite{thomee,despres}, hence there is a genuine hope of proving rigorously that this rate of 
convergence does indeed hold (for smooth compatible data). Such a justification is also left to a future work.

\begin{table}[!htp]
\centering
\begin{tabular}{|c|c|c|}
\hline 
Number of cells $J$ & Inverse Lax-Wendroff inflow condition \\ 
\hline 
1000 & $2.1 \cdot 10^{-8}$ \\
\hline 
2000 & $2.6 \cdot 10^{-9}$ \\
\hline 
4000 & $3.3 \cdot 10^{-10}$ \\
\hline 
\end{tabular}
\caption{The $\ell_{n,j}^\infty$ error for the $O3$ scheme \eqref{LW} with third order outflow extrapolation and the inverse Lax-Wendroff inflow condition.}
\label{Table3}
\end{table}

\bibliographystyle{alpha}
\bibliography{Nabuco19}
\end{document}